\numberwithin{equation}{section}
\newtheorem{theorem}{Theorem}[section]
\newtheorem{lemma}[theorem]{Lemma}
\newtheorem{proposition}[theorem]{Proposition}
\newtheorem{corollary}[theorem]{Corollary}
\theoremstyle{definition}
\newtheorem{remark}[theorem]{Remark}
\newcommand{\be}{\begin{equation}}
\newcommand{\ee}{\end{equation}}
\newcommand{\bes}{\begin{equation*}}
\newcommand{\ees}{\end{equation*}}
\newcommand{\cA}{\mathcal{A}}
\newcommand{\cB}{\mathcal{B}}
\newcommand{\cD}{\mathcal{D}}
\newcommand{\cF}{\mathcal{F}}
\newcommand{\cH}{\mathcal{H}}
\newcommand{\cL}{\mathcal{L}}
\newcommand{\cS}{\mathcal{S}}
\newcommand{\cT}{\mathcal{T}}
\newcommand{\cU}{\mathcal{U}}
\newcommand{\cW}{\mathcal{W}}
\newcommand{\cX}{\mathcal{X}}
\newcommand{\bB}{\mathbb{B}}
\newcommand{\bC}{\mathbb{C}}
\newcommand{\bE}{\mathbb{E}}
\newcommand{\bM}{\mathbb{M}}
\newcommand{\bN}{\mathbb{N}}
\newcommand{\bR}{\mathbb{R}}
\newcommand{\bZ}{\mathbb{Z}}
\newcommand{\ol}{\overline}
\newcommand{\re}{\operatorname{Re}}
\newcommand{\im}{\operatorname{Im}}
\newcommand{\sa}{{\mathrm{sa}}}
\newcommand{\alg}{\operatorname{alg}}
\newcommand{\UCP}{\operatorname{UCP}}
\newcommand{\Wmin}[1]{\cW^{\mathrm{min}}_{#1}}
\newcommand{\Wmax}[1]{\cW^{\mathrm{max}}_{#1}}
\newcommand{\fB}{{\mathfrak{B}}}
\newcommand{\fD}{{\mathfrak{D}}}
\newcommand{\FORAL}{\text{ for all }}
\begin{document}

\title{On the matrix range of random matrices}

\author{Malte Gerhold}
 \address{M.G., Institut f\"ur Mathematik und Informatik \\
 Universit\"at Greifswald\\Walther-Rathenau-Stra\ss{}e 47 \\
 17487 Greifswald \\ Germany\\}
 \email{mgerhold@uni-greifswald.de}
\urladdr{\href{https://math-inf.uni-greifswald.de/institut/ueber-uns/mitarbeitende/gerhold/}{\url{https://math-inf.uni-greifswald.de/institut/ueber-uns/mitarbeitende/gerhold/}}}

 \author{Orr Moshe Shalit}
 \address{O.S., Faculty of Mathematics\\
 Technion - Israel Institute of Technology\\
 Haifa\; 3200003\\
 Israel}
 \email{oshalit@technion.ac.il}
 \urladdr{\href{https://oshalit.net.technion.ac.il/}{\url{https://oshalit.net.technion.ac.il/}}}

 \thanks{The work of M. Gerhold is partially supported by the DFG, project no.\  397960675.}
 \thanks{The work of O.M. Shalit is partially supported by ISF Grant no.\ 195/16.
 }
 \subjclass[2010]{47A13, 46L54, 15B52, 60B20}
 \keywords{Matrix range, random matrices, matrix convexity}
 
\addcontentsline{toc}{section}{Abstract}

\begin{abstract} 
This note treats a simple minded question: {\em what does a typical random matrix range look like?}
We study the relationship between various modes of convergence for tuples of operators on the one hand, and continuity of matrix ranges with respect to the Hausdorff metric on the other.
In particular, we show that the matrix range of a tuple generating a continuous field of C*-algebras is continuous in the sense that every level is continuous in the Hausdorff metric.
Using this observation together with known results on strong convergence in distribution of matrix ensembles, we identify the limit matrix ranges to which the matrix ranges of independent Wigner or Haar ensembles converge.
\end{abstract}
\maketitle


\section{Introduction}\label{sec:introduction}

Let $M_n = M_n(\bC)$ denote the set of all $n \times n$ matrices over $\bC$, and let $M_n^d$ be the set of all $d$-tuples of such matrices.
The {\em matrix range} \cite{Arv72} of a tuple $A = (A_1, \ldots, A_d)$ in $B(\cH)^d$ is the disjoint union $\cW(A) = \bigcup_{n} \cW_n(A)$, where for all $n\in\mathbb N$, the set $\cW_n(A) \subseteq M_n^d$ is defined by
\[
\cW_n(A) = \bigl\{\bigl(\phi(A_1), \ldots, \phi(A_d)\bigr): \phi \in \UCP\bigl(B(\cH), M_n \bigr)\bigr\}; 
\]
here and below, UCP stands for {\em unital completely positive}, and $\UCP(B(\cH), M_n)$ is the set of all UCP maps from $B(\cH)$ to $M_n$.
Matrix ranges are interesting for several reasons.
First, a set $\cS = \bigcup_n \cS_n \subseteq \bM^d := \bigcup_{n=1}^\infty M_n^d$ is a bounded closed matrix convex set if and only if $\cS = \cW(A)$ for some $A\in B(\cH)^d$; 
also, the matrix range of $A$ is a complete invariant of the operator system generated by $A$, and it is useful when considering interpolation problems for UCP maps (see \cite{DDSS17}).
Moreover, in the case of a fully compressed tuple $A$ of compact operators or normal operators, the matrix range determines $A$ up to unitary equivalence \cite{PS19}.
Finally, the first level $\cW_1(A)$, which we loosely refer to as the {\em numerical range} of the tuple $A$, is closely related to the so-called {\em joint numerical range} of $A_1, \ldots, A_d$ (see, e.g., \cite{LP00,LP11,LP19}).
See also \cite{Eve18,Passer,PSS18} for other recent works where matrix ranges are used and studied.

The current interest in matrix ranges led us to ask: {\em what does a typical matrix range look like?}
The purpose of this paper is to formulate and then answer this question in precise terms.
Our main results are Theorems \ref{thm:wigner} and \ref{thm:haar}, which say that the matrix ranges of a sequence $T^N = (T^N_1, \ldots, T^N_d)$ of $d$ independent $N \times N$ matrices from certain ensembles converge almost surely to the matrix range of a corresponding tuple of operators $t$, in the sense that $\cW_n(T^N) \xrightarrow{N \to \infty} \cW_n(t)$ in the Hausdorff metric for compact subsets of $M_n^d$, for all $n$. If $T^N$ are drawn from a Wigner ensemble (satisfying certain moment conditions) then $t = s$ is a $d$-tuple of free semicirculars (Theorem \ref{thm:wigner}); if $T^N$ is drawn according to the Haar distribution on the unitary group $\cU_N$,  then $t = u$ is a tuple of $d$ free Haar unitaries (Theorem \ref{thm:haar}).
In particular, we recover (part of) the results of Collins, Gawron, Litvak and \.{Z}yczkowski \cite{CGLZ14} on the almost sure convergence of the numerical range of random matrices by looking at the case $d = 2$ and $n=1$ of the case of Wigner distributions (Corollary \ref{cor:randomNumRange}).

In Section \ref{sec:cont-matr-range} we prove an effective Effros-Winkler type separation theorem (Lemma \ref{lem:cont_matrange}), which is then used to show that for a family of $d$-tuples $\{\xi_t\}$, if $\lim_{t\to t_0} \|p(\xi_t)\| = \|p(\xi_{t_0})\|$ for every $*$-polynomial $p$, then $\cW_n(\xi_t)$ converges to $\cW_n(\xi_{t_0})$ for all $n$ (see Theorem \ref{thm:convergences}; a certain converse is also proved in Theorem \ref{thm:convergences_unitary}). 
Combining our results with Anderson's theorem \cite{And13} that $\|p(X^N)\| \xrightarrow{a.s.} \|p(s)\|$ for Wigner ensembles, and Collins and Male's result \cite{CM14} that $\|p(U^N)\| \xrightarrow{a.s.} \|p(u)\|$ for Haar ensembles (for every $*$-polynomial $p$), we obtain our main results, Theorems \ref{thm:wigner} and \ref{thm:haar}.

In Section \ref{sec:description}, we use Lehner's formulas \cite{Leh99} to describe the limiting matrix ranges $\cW(s)$ and $\cW(u)$, and in particular the first levels $\cW_1(s)$ and $\cW_1(u)$ for the free semicircular tuple $s$ and the free Haar tuple $u$ (Propositions \ref{prop:LehnerBalllevel1} and \ref{prop:LehnerPolydisclevel1}).
Since $\cW_1(s)$ is a Euclidean ball of radius two, $\cW(s/2)$ lies over a Euclidean ball of radius one, and it is reasonable to refer to it as a {\em matrix ball}.
We compare it to several other matrix balls that were considered in the literature, and we find that $\cW(s/2)$ is different from every one of the five immediate suspects $\Wmin{}(\ol{\bB}_d)$, $\cB$, $\fD$, $\cB^\bullet$ and $\Wmax{}(\ol{\bB}_d)$ (see the next subsection for definitions of these matrix balls).

\subsection*{Some definitions and notation}

In this paper, $d$ will always be some positive integer that may be considered as fixed throughout. If not indicated otherwise, sums will be assumed to run from $1$ to $d$ over all appearing indices.
We will use $\bB_d$ to denote the open unit ball in $\bR^d$.
We let $M_n = M_n(\bC)$ denote the set of all $n \times n$ matrices over $\bC$, and $M_n^d$ the set of all $d$-tuples of such matrices.
The ``noncommutative universe'' (in $d$ variables) is the disjoint union $\bM^d = \bigcup_{n=1}^\infty M_n^d$.
It is sometimes convenient to work in the ``noncommutative selfadjoint universe'' $\bM^{d}_{\sa} = \bigcup_{n=1}^\infty (M^d_n)_{\sa}$ instead of $\bM^d$. One can always move from $\bM^d$ to $\bM^{2d}_{\sa}$ by replacing $(A_1, \ldots, A_d)$ with the $2d$-tuple of real and imaginary parts $(\re A_1, \im A_1, \ldots, \re A_d, \im A_d)$, and for many problems in matrix convexity there is no loss of generality working in the selfadjoint setting.

We shall consider subsets $\cS$ of $\bM^d$ (or $\bM^d_{\sa}$), and write $\cS_n$ or $\cS(n)$ for the {\em $n$th level} $\cS \cap M_n^d$ (or $\cS \cap (M_n)^d_{\sa}$).
Such a set $\cS \subseteq \bM^d$ is said to be {\em matrix convex}, if it is closed under direct sums and under the application of UCP maps, that is, whenever $X$ is in $\cS_n$ and $\phi\colon M_n \to M_k$ is a UCP map, the tuple $\phi(X) := (\phi(X_1), \ldots, \phi(X_d))$ is in $\cS_k = \cS \cap M_k^d$.
Equivalently, $\cS$ is matrix convex if and only if it closed under {\em matrix convex combinations}.

We let $\bC\langle z \rangle = \bC\langle z_1,\ldots,z_d\rangle$ denote the complex algebra of polynomials in $d$ noncommuting variables $z_1, \ldots, z_d$, and we write $\bC\langle z, z^* \rangle$ for the $*$-algebra $\bC\langle z_1,\ldots,z_d, z_1^*, \ldots, z_d^*\rangle$ of $*$-polynomials in these variables. $\bC_k\langle z, z^*\rangle$ will denote the space of $*$-polynomials of degree less than or equal to $k$ (we will be interested really just in the case $k=1$). We write $M_n(\bC \langle z, z^*\rangle)$ for the space of $*$-polynomials in $z$ with $n \times n$ matrix coefficients, or --- equivalently --- $n \times n$ matrices with entries in $\bC \langle z, z^* \rangle$. Likewise, $M_n(\bC_k \langle z, z^*\rangle)$ stands for the subspace consisting of matrix valued $*$-polynomials of degree at most $k$, etc. 

For every $d$-tuple of operators $X = (X_1, \ldots, X_d)$ we write 
\[
\|X\| := \left\|\sum X_i X_i^*\right\|^{1/2} .
\]
For every fixed $n$, $M_n^d$ then becomes a metric space with $d(X,Y) = \left\|X-Y\right\|$.
The norm on $M_n^d$ also induces a distance function on the subsets of $M_n^d$, the {\em Hausdorff distance}, given by
\[
d_H(E,F) = \max\left\{\sup_{x \in E} d(x,F),  \sup_{y \in F}  d(y,E)\right\},
\]
where $d(x,F) = \inf_{y \in F}d(x,y)$. 
The {\em Hausdorff metric}, also denoted $d_H$, is the restriction of the Hausdorff distance to the compact subsets of $M_n^d$. 

We say that a matrix convex set $\cS$ is {\em closed}, if $\cS_n$ is closed for all $n$.
We say that it is {\em bounded}, if there exists some $R>0$ such that $\|X\| \leq R$ for all $X \in \cS$.
We say that $0 \in \operatorname{int}(\cS)$, if there exists some $r>0$ such that $\cS$ contains all $X \in \bM^d$ satisfying $\|X\|<r$.
It is not hard to see (see, for example \cite[Lemma 3.4]{DDSS17})
that $\cS$ is bounded if and only if $\cS_1$ is bounded, and that $0 \in \operatorname{int}(\cS)$ if and only if $0 \in \operatorname{int}(\cS_1)$ in the usual sense.

For every closed and convex set $K$ in $\bR^d$ (or in $\bC^d$) there exist minimal and maximal matrix convex sets, denoted $\Wmin{}(K)$ and $\Wmax{}(K)$, respectively, that have $K$ as its first level \cite[Section 4]{DDSS17}.
We will need below that $\Wmax{}(\ol{\bB}_d) \subseteq d \Wmin{}(\ol{\bB}_d)$; see \cite[Proposition 7.11]{DDSS17} and \cite[Proposition 14.1]{HKMS19} (in fact, the containment $\Wmax{}(K) \subseteq d \Wmin{}(K)$ is true for every convex $K\subseteq \bR^d$ satisfying $K = - K$, see \cite[Theorem 5.8]{FNT17} and \cite[Corollary 4.4]{PSS18}).

The {\em polar dual} \cite{EW97} of a matrix convex set $\cS \subseteq \bM^d$ is the set
\[
\cS^\circ = \left\{X \in \bM^d : \re \sum X_i \otimes Y_i \leq I \FORAL Y \in \cS\right\}.
\]
In the selfadjoint context $\cS \subseteq \bM_{\sa}^d$, we use the definition
\[
\cS^\bullet = \left\{X \in \bM^d_{\sa} : \sum X_i \otimes Y_i \leq I \FORAL Y \in \cS\right\}.
\]
Some properties of polar duals are recorded in \cite[Section 3]{DDSS17}.
We will need the identity
\[
\cW(A)^\circ = \cD_A := \left\{X \in \bM^d : \re \sum X_i \otimes A_i \leq I\right\}
\]
and, in the selfadjoint context,
\[
\cW(A)^\bullet = \cD_A^{\sa} := \left\{X \in \bM^d_{\sa} :  \sum X_i \otimes A_i \leq I\right\}.
\]
When $0 \in \cW(A)$, then also $\cW(A) = (\cD_A)^\circ$ (or $\cW(A) = (\cD_A^{\sa})^\bullet$).
The sets of the form $\cD_A$ (or $\cD^{\sa}_A$)  are called {\em free spectrahedra} (some authors reserve this term for matrix tuples $A$, but we do not).

Polar duality of (selfadjoint) matrix convex sets is consistent with polar duality of (real) convex sets for the first levels, i.e., if $\cS$ is a (selfadjoint) matrix convex set with first level $\cS_1=K$, then
\begin{align*}
(\cS^{\circ})_1=K'&:=\left\{w\in\mathbb C^d : \re\sum w_iz_i\leq 1 \textrm{ for all } z \in K\right\}
\end{align*}
(or $(\cS^{\bullet})_1=K':= \left\{w\in\mathbb R^d : \sum w_i z_i\leq 1 \textrm{ for all } z \in K\right\}$, where we use the same notation for polar duality in $\mathbb R^d$). 

The most natural selfadjoint matrix ball to consider is perhaps
\[
\fB = \left\{X \in \bM^{d}_{\sa} : \sum X_i^2 \leq 1\right\}.
\]
Sometimes we write $\fB_d$ if we need to emphasize the dependence on the ``number of variables'' $d$. The polar dual $\fB^\bullet$ also seems worthy of consideration.
Another interesting matrix ball is
\[
\fD = \left\{X \in \bM^{d}_{\sa} : \sum X_i \otimes \ol{X}_i \leq 1\right\}.
\]
$\fD$ is self-dual in the sense that $\fD^\bullet = \fD$ \cite[Lemma 9.2]{DDSS17} (and in fact $\fD$ is the only self-dual matrix convex set in $\bM^d_{\sa}$ that is closed under complex conjugation \cite[Remark 9.3]{DDSS17}). 
The minimal matrix convex set over the unit ball $\Wmin{}(\ol{\bB}_d)$, and the maximal matrix convex set over the unit ball $\Wmax{}(\ol{\bB}_d)$, are two other compelling matrix balls to consider.
In \cite[Corollary 9.4]{DDSS17}, it was shown that
\be\label{eq:balls1}
\Wmin{}(\ol{\bB}_d) \subset \fB \subset \fD = \fD^\bullet \subset \fB^\bullet \subset \Wmax{}(\ol{\bB}_d),
\ee
and that all inclusions are strict.
The question of the scales that make the reverse inclusions hold was studied in \cite[Section 9]{DDSS17} and \cite[Section 14]{HKMS19}.
It was shown that
\be\label{eq:balls2}
\frac{1}{\sqrt{d}}\Wmax{}(\ol{\bB}_d) \subset \fB \subset \fD \subset \fB^\bullet \subset \sqrt{d}\Wmin{}(\ol{\bB}_d).
\ee
Note that a matrix convex set $\cS \subseteq \bM^d_{\sa}$ is bounded if and only if $\cS \subseteq R \fB$, and that $0 \in \operatorname{int}(\cS)$ if and only if $r\fB \subseteq \cS$ (for the same $R$ and $r$ as in the definition above). By \eqref{eq:balls1} and \eqref{eq:balls2}, we can replace $\fB_d$ with any one of the sets $\Wmin{}(\ol{\bB}_d)$, $\fD$, $\fB^\bullet$ or $\Wmax{}(\ol{\bB}_d)$ in the definition of boundedness or $0$ in the interior (though the inclusions will hold with different values of $r$ and $R$).

\section{Continuity of matrix ranges}\label{sec:cont-matr-range}

We begin with an effective version of the Effros-Winkler Hahn-Banach type separation theorem \cite{EW97}.
For conciseness, we state it in the selfadjoint setting, but it should be clear how to obtain a nonselfadjoint version.
For every $n$, we define a norm on the finite dimensional space $M_n(\bC_1 \langle z\rangle)$ of linear pencils of size $n$, by
\[
\|p\| = \sup\{\|p(X)\| : X \in \bM^d_{\sa}, \|X\| \leq 1\}.
\]

\begin{lemma}\label{lem:cont_matrange}
Let $\cS \subseteq \bM^d_{\sa}$ be a bounded matrix convex set.
Then for every $\varepsilon > 0$, there is a $\delta>0$ such that for all $n \in \bN$, and all $A \in (M_n)^d_{\sa}$,
\[
d(A,\cS) > \varepsilon \implies \exists p \in M_n(\bC_1\langle z\rangle),\|p\|\leq 1 . \forall X\in \cS.  \left\|p(A)\right\| > \left\|p(X)\right\| + \delta .
\]
Furthermore, $\delta$ can be chosen to depend only on $\varepsilon$, $d$, and a positive number $R$ such that $\cS \subseteq R\fB_d$ (in particular, $\delta$ is independent of the level $n$ and the matrix convex set $\cS$).
\end{lemma}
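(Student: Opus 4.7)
\textit{Proof plan.} I will prove a quantitative form of the Effros--Winkler matrix-convex Hahn--Banach separation theorem, with $\delta$ depending only on $\varepsilon$, $d$, and $R$. I begin by reducing to the case $0 \in \cS$ via translation by a scalar tuple $X_0 \in \cS_1$, which is non-empty (any state restricted to the span of $I$ and the $A_i$ yields such a point) and has norm at most $R$. This substitution $X \mapsto X - X_0$ preserves linear pencils --- a pencil $\tilde p(z)$ for $\cS - X_0$ becomes $p(z) := \tilde p(z - X_0)$ for $\cS$, modifying only the constant term since $X_0$ is scalar --- changes $R$ to at most $2R$, and absorbs only a factor of $O(R)$ into $\delta$. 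So assume $0 \in \cS$ henceforth.

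The first step is a Minkowski-sum thickening: set $\cT := \cS + \tfrac{\varepsilon}{2}\fB_d$, with the sum taken level by level. Since Minkowski sums of matrix convex sets are matrix convex, $\cT$ is a bounded matrix convex set with $\cS \subseteq \cT \subseteq (R+\tfrac{\varepsilon}{2})\fB_d$ and $\cT \supseteq \tfrac{\varepsilon}{2}\fB_d$ (because $0 \in \cS$). The hypothesis $d(A,\cS)>\varepsilon$ forces $d(A,\cT) \geq \tfrac{\varepsilon}{2}>0$, so $A \notin \cT$.

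Next I apply the Effros--Winkler theorem \cite{EW97} to produce a selfadjoint linear pencil $L(z) = I - \sum_i Y_i z_i$ satisfying $L(X) \geq 0$ for every $X \in \cT$ while $L(A) \not\geq 0$; a standard compression by a minimal negative spectral subspace of $L(A)$ reduces to coefficients $Y_i \in (M_n)_\sa$. The thickening yields quantitative slack: for $X\in\cS$ and $B\in\tfrac{\varepsilon}{2}\fB_d$, the inclusion $X+B\in\cT$ yields $L(X)\geq \sum_i Y_i\otimes B_i$, and $d(A,\cT)\geq\tfrac{\varepsilon}{2}$, after optimizing over $Y\in\cT^\bullet$, provides a quantitative lower bound on $\lambda_{\max}(\sum_i Y_i\otimes A_i) - 1$. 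Uniformity comes from polar duality applied to $\cT \supseteq \tfrac{\varepsilon}{2}\fB_d$ together with the chain \eqref{eq:balls2}: $\cT^\bullet \subseteq \tfrac{2}{\varepsilon}\fB_d^\bullet \subseteq \tfrac{C(d)}{\varepsilon}\fB_d$ for a constant $C(d)$ depending only on $d$, so $\|Y\| \leq C(d)/\varepsilon$ uniformly in $n$ and $\cS$.

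Finally I construct $p$ from $L$ as a normalized, shifted pencil: for suitable scalars $\alpha\in(0,1]$ and $\beta\in\bR$, set $p(z) := \alpha\bigl(\sum_i Y_i z_i - \beta I\bigr)$. The shift $\beta$ centers the spectrum of $p(X)$ for $X\in\cS$ using the Effros--Winkler upper bound together with the two-sided estimate $\|\sum_i Y_i\otimes X_i\|\leq \|Y\|\,\|X\|\leq C(d)R/\varepsilon$; the normalization $\alpha$ enforces $\|p\|\leq 1$. The slack from the thickening combined with the uniform bound on $\|Y\|$ then yields the strict inequality $\|p(A)\| > \|p(X)\| + \delta$ with $\delta=\delta(\varepsilon,d,R)$ independent of $n$ and of the particular $\cS$. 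The main technical obstacle is this final step --- converting the one-sided (PSD) Effros--Winkler separation into a two-sided operator-norm separation while simultaneously maintaining $\|p\|\leq 1$ and the uniformity of $\delta$. The subtlety is that $\sum_i Y_i\otimes X_i$ may have large negative eigenvalues for $X\in\cS$ even when $Y\in\cS^\bullet$, so the balance between $\alpha$ and $\beta$ must be driven carefully by the uniform spectral bounds of the preceding steps.
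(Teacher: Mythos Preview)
Your plan parallels the paper's proof closely: Minkowski-thicken to put a ball of radius $r=\varepsilon/2$ inside, invoke Effros--Winkler, bound the pencil coefficients via polar duality, then shift and normalize. The paper does the same, only ordering the thickening last rather than first. There is, however, one genuine gap in your plan: the \emph{source} of the uniform quantitative slack.

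In the paper, the decisive move is to separate $A$ not from $\cT$ but from a \emph{dilate} $c\cT$ (in the paper's notation, $c\cS$) with $c=1+\varepsilon/R$. The elementary observation is that $d(A,\cS)>\varepsilon$ and $\cS\subseteq R\fB_d$ force $A\notin c\cS$; separating from $c\cS$ then gives, for every $X\in\cS$, the clean uniform bound
\[
\lambda_{\min}\bigl(p(X)\bigr)\;=\;\lambda_{\min}\!\left(\tfrac{1}{c}\,p(cX)+\bigl(1-\tfrac{1}{c}\bigr)I\right)\;\geq\;1-\tfrac{1}{c}\;>\;\tfrac{\varepsilon}{2R}.
\]
After that, only the \emph{non-quantitative} fact $p(A)\not\geq 0$ is needed on the $A$ side; the entire gap $\delta$ comes from this $X$-side estimate.

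Your two proposed sources of slack do not quite deliver this. Source (a), the inequality $L(X)\geq\sum_i Y_i\otimes B_i$ for $B\in\tfrac{\varepsilon}{2}\fB_d$, does not by itself give a uniform lower bound $\lambda_{\min}(L(X))\geq\eta$: for a fixed minimizing unit vector $v$, the quantity $\sup_{B}\langle\sum_i Y_i\otimes B_i\,v,v\rangle$ depends on how $v$ interacts with $Y$ and is not bounded below independently of $v$, $Y$, $n$. Source (b), ``optimizing over $Y\in\cT^\bullet$'' to get a quantitative lower bound on $\lambda_{\max}(\sum_i Y_i\otimes A_i)-1$, \emph{can} be made to work, but the argument is precisely the dilation trick in disguise: from $d(A,\cT)\geq\varepsilon/2$ and $\cT\subseteq R'\fB_d$ one deduces $A\notin(1+\gamma)\cT$ with $\gamma=\varepsilon/(2R')$, and then Effros--Winkler applied to $(1+\gamma)\cT$ produces $Y\in\cT^\bullet$ with $\lambda_{\max}(\sum_i Y_i\otimes A_i)>1+\gamma$. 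You should make this step explicit; once you do, your argument and the paper's coincide.
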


\begin{proof}
We first prove the result under the assumption that
\[
r \fB_d \subseteq \cS \subseteq R\fB_d
\]
for some $0<r<R$.
Fix $\varepsilon > 0$, and assume without loss of generality that $\varepsilon < 1 < R$. Let $A\in (M_n^d)_{\sa}$ be such that $d(A,\cS) := d(A,\cS_n)>\varepsilon$. Then $A\notin c\cS$ for $c=1+\frac{\varepsilon}{R}$.
Indeed, $A\in c\cS$ would imply
\[
d(A,\cS) \leq \sup_{B\in \cS_n} d(cB,B) = \sup_{B\in \cS_n}\frac{\varepsilon}{R}\|B\|\leq \varepsilon.
\]
Note that $1+c>2$ and, by our assumption $\varepsilon < R$, also $1-\frac{1}{c}>\frac{\varepsilon}{2R}$.

By the Effros-Winkler separation theorem \cite[Theorem 5.4]{EW97}, we can find a monic linear pencil $p(z) = I+\sum a_i z_i$ of size $n$ with selfadjoint coefficients that separates $A$ from $c \cS$, that is, such that
\[
p(cX) = I + \sum a_i\otimes cX_i \geq 0 \quad \textrm{for all }  X\in \cS,
\]
while
\[
p(A) = I + \sum a_i\otimes A_i \not\geq 0.
\]
Since $r\fB_d$ is symmetric and a subset of $\cS$, we have $-I\leq \sum a_i\otimes B_i\leq I$
for all $B\in r\fB_d$ and, therefore,
\[
\sup_{B\in r \fB_d}\left\|\sum a_i\otimes B_i\right\|\leq 1.
\]
So the linear pencil $p$ has norm $\|p\| \leq 1 + r^{-1}$.

From $p(A)\not\geq0$, we know that there is a negative eigenvalue of $p(A)$. In the following, we establish estimates for the minimal and maximal eigenvalues of $p(X)$ for $X \in \cS$. First, using $p(cX) \geq 0$, we know that
\[
\lambda_{\min}\left(I+\sum a_i\otimes X_i\right)=\lambda_{\min}\left(\frac{1}{c}\left(I+\sum a_i\otimes cX_i\right)+ 1-\frac{1}{c}\right) \geq 1-\frac{1}{c}>\frac{\varepsilon}{2R}.
\]
On the other hand, $\cS \subseteq R\fB_d$, so if $X \in \cS$ we can write $X = R B$ with $B\in \fB_d$, thus
\[
\lambda_{\max}\left(I+\sum a_i\otimes X_i\right) = \left\|I+\sum a_i\otimes RB_i\right\|\leq R\|p\| + \left|1-R\right| \leq 2R(1 + r^{-1}).
\]
Now, we define a shifted linear pencil
\[
q := p - 2R(1 + r^{-1}).
\]
Then,
\[
\|q\|\leq (1+r^{-1}) + 2R(1 + r^{-1}) = (2R + 1)(1 + r^{-1}).
\]
Furthermore,
\[
\sigma(q(X)) \subseteq \left[\frac{\varepsilon}{2R}-2R(1+r^{-1}),0 \right]
\]
for all $X\in \cS_n$ and $\lambda_{\min}(q(A))< -2R(1 + r^{-1})$. Therefore $\|q(A)\|-\|q(X)\|>\frac{\varepsilon}{2R}$ for all $X \in \cS$.

Now, $\widetilde{q}:=\frac{1}{(2R + 1)(1 + r^{-1})}q$ is a polynomial of norm less than 1 with
\[
\|\widetilde{q}(A)\|-\|\widetilde{q}(X)\|>\frac{\varepsilon}{2R(2R + 1)(1 + r^{-1})}=:\delta
\]
for all $X \in \cS$, as required.
Thus, the lemma is proved under the assumption that $\cS$ contains $0$ in its interior, with $\delta$ depending also on the size of a ball that fits in $\cS$.

Let $\cS$ be a general bounded (i.e., $\cS \subseteq R\fB_d$ for some $R$) matrix convex set, not necessarily containing $0$ in its interior.
By applying an affine change of variables we may assume that that $0 \in \cS$ (see, e.g., \cite[Section 3.1]{PSS18}).
Given $r > 0$, we define
\[
\cS^{(r)} = \cS + r \fB_d = \bigcup_{n=1}^\infty \left\{X + Y : X \in \cS(n), Y \in r\fB_d(n)  \right\},
\]
which is readily seen to be a bounded matrix convex set containing $r \fB_d$.
Now if $d(A,\cS) > \varepsilon$, then $d(A, \cS^{(\varepsilon/2)}) > \varepsilon/2$, and we may proceed as in the first part of the proof.
\end{proof}

In the proof of Theorem \ref{thm:convergences} below, we will need that if $\cW_n(A)$ and $\cW_n(B)$ both contain a neighborhood of zero and are close in the Hausdorff metric, then there exists a constant $c$ not much bigger than $1$ such that $\cW_n(A) \subseteq c \cW_n(B)$.
This follows from the following simple lemma, which we state more generally.

\begin{lemma}\label{lem:close_implies_dilation}
  Let $(\cX, \|\cdot\|)$ be a normed space
  and $\varepsilon>0$. 
Suppose that $E,F \subseteq \cX$ are bounded convex sets, and assume that there exists some $r > 0$ such that the ball $B_r = \{ x \in \cX : \|x\| \leq 1\}$ is contained in both $E$ and $F$.
Then $d_H(E,F) < \varepsilon$ implies $F \subseteq \frac{r+\varepsilon}{r}E$.
\end{lemma}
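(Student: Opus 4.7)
The plan is to show the containment pointwise. Take an arbitrary $y\in F$; the goal is to prove that $\frac{r}{r+\varepsilon}\,y$ lies in $E$, which is equivalent to $y\in \frac{r+\varepsilon}{r} E$.

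Since $d_H(E,F)<\varepsilon$, in particular $\sup_{y\in F} d(y,E)<\varepsilon$, so I can pick $x\in E$ with $\|y-x\|<\varepsilon$. The idea is now to exhibit $\tfrac{r}{r+\varepsilon}y$ as an explicit convex combination of $x$ and a point of the ball $B_r$. Concretely, set
\[
u \;:=\; \frac{r}{\varepsilon}\,(y-x).
\]
Then $\|u\|=\tfrac{r}{\varepsilon}\|y-x\|<r$, so $u\in B_r\subseteq E$ by hypothesis. A direct computation gives
\[
\frac{r}{r+\varepsilon}\,x \;+\; \frac{\varepsilon}{r+\varepsilon}\,u
\;=\; \frac{r}{r+\varepsilon}\,x \;+\; \frac{r}{r+\varepsilon}(y-x)
\;=\; \frac{r}{r+\varepsilon}\,y,
\]
which is a convex combination (the coefficients $\tfrac{r}{r+\varepsilon}$ and $\tfrac{\varepsilon}{r+\varepsilon}$ are nonnegative and sum to $1$) of two elements of $E$. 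By convexity of $E$, the point $\tfrac{r}{r+\varepsilon}y$ lies in $E$, which is exactly the claim $y\in\frac{r+\varepsilon}{r}E$.

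There is no real obstacle here: the whole argument hinges on guessing the right scaling $\tfrac{r}{\varepsilon}$ so that the vector $y-x$ of length $<\varepsilon$ is pushed into $B_r$, at which point the weights in the convex combination are forced by the requirement that the combination equal $\tfrac{r}{r+\varepsilon}y$. Note that $F$'s convexity and the containment $B_r\subseteq F$ are not used — only $B_r\subseteq E$, convexity of $E$, and the one-sided Hausdorff bound $\sup_{y\in F}d(y,E)<\varepsilon$ matter.
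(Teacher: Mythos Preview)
Your proof is correct and essentially identical to the paper's: both choose $x\in E$ with $\|y-x\|<\varepsilon$, note that $\tfrac{r}{\varepsilon}(y-x)\in B_r\subseteq E$, and take the convex combination $\tfrac{r}{r+\varepsilon}x+\tfrac{\varepsilon}{r+\varepsilon}\cdot\tfrac{r}{\varepsilon}(y-x)=\tfrac{r}{r+\varepsilon}y$. Your closing remark that only $B_r\subseteq E$, convexity of $E$, and the one-sided Hausdorff bound are needed is a nice observation not made explicit in the paper.
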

\begin{proof}
Let $y \in F$.
By the definition of the Hausdorff distance, there exists $x \in E$ such that $\|x - y\| < \varepsilon$.
Since $B_r \subseteq E$, $\frac{r}{\varepsilon}(y-x) \in E$.
Since $E$ is convex, the convex combination
\[
\frac{\varepsilon}{r+\varepsilon} \frac{r}{\varepsilon}(y-x) + \frac{r}{r+\varepsilon} x = \frac{r}{r+\varepsilon}y
\]
is also in $E$.
In other words, $y \in \frac{r+\varepsilon}{r}E$.
\end{proof}

In the proof of the next theorem, we will use the language of continuous bundles of C*-algebras according to \cite[Definition 1.1]{KW95}. 
In fact, in this paper, our interest lies only in bundles over the one point compactification of the natural numbers $\hat{\bN} = \bN \cup \{\infty\}$, in which case the notion of a {\em continuous bundle} coincides with the classical notion of a {\em continuous field} of C*-algebras in the sense of \cite[Section 10]{DixmierBook}. We formulate the following theorem in slightly greater generality since we have some future applications in mind.

\begin{theorem}\label{thm:convergences}
Suppose that $\cT$ is a metric space, and suppose that for every $t\in \cT$, there is a unital C*-algebra $\cA_t$ and a $d$-tuple $\xi_t = (\xi_{t,1}, \ldots, \xi_{t,d}) \in \cA_t^d$ generating $\cA_t$ as a unital C*-algebra. Consider the following conditions for $t_0 \in \cT$:
\begin{enumerate}[label={\textup{(\roman*)}}]
\item\label{it:polys} $\lim_{t\to t_0} \|p(\xi_t)\| = \|p(\xi_{t_0})\|$ for all $p\in \bC\langle z,z^* \rangle$.
\item\label{it:matpolys} $\lim_{t\to t_0} \|p(\xi_t)\| = \|p(\xi_{t_0})\|$ for all $n \in \bN$ and all $p\in M_n(\bC\langle z,z^* \rangle)$.
\item\label{it:linmatpolys} $\lim_{t\to t_0} \|p(\xi_t)\| = \|p(\xi_{t_0})\|$  for all $n\in \bN$ and all $p\in M_n(\bC_1 \langle z, z^*\rangle)$.
\item\label{it:matrange} $\lim_{t\to t_0}d_H(\cW_n(\xi_t),\cW_n(\xi_{t_0})) = 0$ for all $n\in\mathbb N$.
\end{enumerate}
Then we have the implications \ref{it:polys} $\Longleftrightarrow$ \ref{it:matpolys} $\Longrightarrow$ \ref{it:linmatpolys} $\Longleftrightarrow$ \ref{it:matrange}.
\end{theorem}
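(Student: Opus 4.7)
Three of the four implications carry the substance of the theorem. The implication \ref{it:matpolys} $\Rightarrow$ \ref{it:polys} is trivial (scalar polynomials are $1\times 1$ matrix polynomials), and \ref{it:matpolys} $\Rightarrow$ \ref{it:linmatpolys} is also trivial (linear matrix polynomials are a special case of matrix polynomials). The plan is to handle the remaining implications \ref{it:polys} $\Rightarrow$ \ref{it:matpolys}, \ref{it:matrange} $\Rightarrow$ \ref{it:linmatpolys}, and \ref{it:linmatpolys} $\Rightarrow$ \ref{it:matrange} by three distinct arguments.

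For \ref{it:polys} $\Rightarrow$ \ref{it:matpolys}, I would appeal to continuous fields/bundles of C*-algebras. It suffices to verify convergence along an arbitrary sequence $t_k \to t_0$, so I would work over the one-point compactification $\hat{\bN}$. Taking as distinguished sections the polynomial sections $k \mapsto q(\xi_{t_k})$ (with $\infty \mapsto q(\xi_{t_0})$) for $q \in \bC\langle z, z^* \rangle$, condition \ref{it:polys} is precisely the norm-continuity axiom for these. Since $\xi_t$ generates $\cA_t$, completing under the sup norm produces an honest continuous bundle with fibers $\cA_{t_k}$. Matrix amplification preserves continuous bundles, so $k \mapsto p(\xi_{t_k})$ is a continuous section of $(M_n(\cA_{t_k}))$ for every $p \in M_n(\bC\langle z, z^* \rangle)$, and \ref{it:matpolys} follows from continuity of norms of continuous sections.

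For \ref{it:matrange} $\Rightarrow$ \ref{it:linmatpolys}, the crucial observation is that for a linear pencil $p \in M_n(\bC_1\langle z, z^*\rangle)$,
\[
\|p(\xi_t)\| = \sup\bigl\{\|p(X)\| : X \in \cW_{2n}(\xi_t)\bigr\}.
\]
Indeed, unit vectors $\eta, \xi \in \bC^n \otimes H_t$ approximating $\|p(\xi_t)\|$ have their $2n$ component vectors spanning a subspace $K \subseteq H_t$ with $\dim K \leq 2n$; because $p$ is linear in $\xi_t$, compressing to $K$ commutes with the evaluation, giving $\langle \eta, p(\xi_t) \xi\rangle = \langle \eta, p(X_K) \xi\rangle$ for some $X_K \in \cW_{\dim K}(\xi_t)$, and a direct-sum padding with any UCP image brings $X_K$ into $\cW_{2n}(\xi_t)$ without lowering $\|p(X_K)\|$. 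Since $X \mapsto \|p(X)\|$ is Lipschitz on bounded sets, \ref{it:matrange} at level $2n$ yields the desired convergence.

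For \ref{it:linmatpolys} $\Rightarrow$ \ref{it:matrange}, both upper and lower semicontinuity of $\cW_n$ at $t_0$ must be established. Upper semicontinuity is relatively direct: given $X_k \in \cW_n(\xi_{t_k})$ with $X_k \to X_*$ and any selfadjoint pencil $L$ with $L(\xi_{t_0}) \geq 0$, applying \ref{it:linmatpolys} to the pencils $\alpha I - L$ for large $\alpha$ gives $\lambda_{\min}(L(\xi_{t_k})) \to \lambda_{\min}(L(\xi_{t_0})) \geq 0$, so $L(\xi_{t_k}) \geq -\eta I$ eventually; UCP monotonicity then yields $L(X_k) \geq -\eta I$, whence $L(X_*) \geq 0$, placing $X_*$ in $\cW_n(\xi_{t_0})$ by Effros--Winkler. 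Lower semicontinuity is more delicate and uses a \emph{lifting principle}: for a size-$n$ selfadjoint pencil $L$, one has $L(\xi_t) \geq \alpha I$ iff $L(X) \geq \alpha I$ for all $X \in \cW_n(\xi_t)$ (the nontrivial direction compresses a witness vector to a subspace of dimension at most $n$, then direct-sums with an arbitrary UCP image to remain at level $n$). Supposing $X_0 \in \cW_n(\xi_{t_0})$ has $d(X_0, \cW_n(\xi_{t_k})) > \varepsilon$ along a subsequence, Lemma \ref{lem:cont_matrange}, applied uniformly (the $\|\xi_t\|$ are bounded near $t_0$ by \ref{it:linmatpolys}), supplies monic selfadjoint pencils $p_k \in M_n(\bC_1\langle z, z^*\rangle)$ of bounded coefficient norms satisfying $p_k(X) \geq (1 - 1/c)I$ for $X \in \cW_n(\xi_{t_k})$ and $p_k(X_0) \not\geq 0$. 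Extracting $p_k \to p_*$ by compactness in the finite-dimensional coefficient space, the lifting principle forces $p_k(\xi_{t_k}) \geq (1-1/c)I$; then \ref{it:linmatpolys} together with a triangle-inequality estimate for $p_k \to p_*$ gives $p_*(\xi_{t_0}) \geq (1-1/c)I$, and by the lifting principle once more $p_*(X_0) \geq (1-1/c)I > 0$, contradicting $\lambda_{\min}(p_k(X_0)) < 0$. I expect the principal technical delicacy to be in this last step: matching the size-$n$ pencils produced by Lemma \ref{lem:cont_matrange} to the size-$n$ matrix range via the lifting principle while keeping the separation gap uniform across the sequence $p_k$, which is precisely what the $n$- and $\cS$-independence of $\delta$ in Lemma \ref{lem:cont_matrange} is designed to supply.
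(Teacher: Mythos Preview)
Your proposal is correct. The implication \ref{it:polys} $\Rightarrow$ \ref{it:matpolys} via continuous bundles over $\hat{\bN}$ is exactly the paper's argument. Your treatment of \ref{it:linmatpolys} $\Rightarrow$ \ref{it:matrange} is close in spirit---both rely on effective separation and a compactness/limit-pencil argument---though you split the two halves asymmetrically: your upper-semicontinuity half bypasses Lemma \ref{lem:cont_matrange} entirely, arguing directly via $\lambda_{\min}$ and Effros--Winkler, whereas the paper applies the norm-gap version of Lemma \ref{lem:cont_matrange} symmetrically to both halves. (One small point: what you actually extract for the lower-semicontinuity half---monic pencils with $p_k(X) \geq (1-1/c)I$ on $\cW_n(\xi_{t_k})$ and bounded coefficients---is the intermediate Effros--Winkler pencil produced inside the \emph{proof} of Lemma \ref{lem:cont_matrange}, not its stated norm-gap conclusion; this is fine, but it is worth being explicit that you are invoking the proof rather than the statement.)

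The genuinely different step is \ref{it:matrange} $\Rightarrow$ \ref{it:linmatpolys}. The paper argues in two pieces: $\liminf_k \|p(\xi_k)\| \geq \|p(\xi_\infty)\|$ by compressing $\xi_\infty$ to a finite-dimensional subspace and perturbing, and $\limsup_k \|p(\xi_k)\| \leq \|p(\xi_\infty)\|$ by invoking Lemma \ref{lem:close_implies_dilation} together with a result of Zalar to produce unital $2n$-positive (hence $n$-contractive) maps $c\xi_\infty \mapsto \xi_k$. The latter requires $0 \in \operatorname{int} \cW_1(\xi_\infty)$, so the paper must then handle the degenerate case separately by an affine reduction to a lower-dimensional coordinate subspace. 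Your single observation $\|p(\xi_t)\| = \sup\{\|p(X)\| : X \in \cW_{2n}(\xi_t)\}$, combined with the global Lipschitz continuity of $X \mapsto \|p(X)\|$ for a linear pencil $p$, handles both inequalities at once and is more elementary: it avoids Zalar's theorem, Lemma \ref{lem:close_implies_dilation}, and the degenerate-interior analysis entirely. The paper's route, on the other hand, makes the underlying dilation-theoretic mechanism (Hausdorff closeness $\Rightarrow$ scaled containment $\Rightarrow$ $n$-contractive map) explicit.
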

\begin{proof}
Since continuity of maps between metric spaces is determined by sequences, it suffices to consider the compact metric space $\cT = \hat{\bN} = \bN \cup \{\infty\}$ (the one point compactification of the natural numbers) and to test continuity at $t_0 = \infty$.

Consider the minimal bundle of unital C*-algebras over $\hat{\bN}$ with $\xi_1,\ldots, \xi_d$ as sections, i.e., the bundle $(\hat{\bN}, \cA_t,\cA)$ with base space $\hat{\bN}$, fiber $\cA_t$ at $t\in\hat\bN$, and bundle C*-algebra $\cA\subset \prod_{t\in \hat{\bN}} \cA_t$ generated by the sections $f1:=(t\mapsto f(t)1_{\cA_t})$ and  $f \xi_j := (t \mapsto f(t)\xi_{t,j})$, where  $j=1, \ldots, d$ and $f \in C(\hat{\bN})$ (with the canonical projections).  Assume \ref{it:polys}, which is easily seen to be equivalent to continuity of the bundle $(\hat{\bN}, \cA,\cA_t)$.  
Since $M_n$ is nuclear, \cite[Remark 2.6(1)]{KW95} implies that the bundle $(\hat{\bN}, \cA_t\otimes M_n, \cA \otimes M_n)$ is continuous, hence continuous at $\infty$, proving \ref{it:polys} $\Rightarrow$ \ref{it:matpolys}.
The converse implication, as well as the implication \ref{it:matpolys} $\Rightarrow$ \ref{it:linmatpolys}, are immediate.

Now suppose that \ref{it:linmatpolys} holds.
Note that \ref{it:linmatpolys} implies in particular that 
\[
\|\xi_k\| \xrightarrow{k \to \infty} \|\xi_\infty\|
\] 
(when considered as row norms), thus the family $\xi_k$ and,  therefore, the matrix ranges $\cW(\xi_k)$ remain uniformly bounded as $k \to \infty$.

For the remainder of the proof we shall argue in the selfadjoint setting, that is, we assume that all $\xi_k$ are selfadjoint and therefore all our matrix ranges are contained in $\bM^d_{\sa}$.
As in Lemma \ref{lem:cont_matrange}, we define a norm
\[
\| p\| = \sup_{\|X\| \leq 1}\|p(X)\| = \sup_{X \in \fB_d} \|p(X)\|
\]
on the finite dimensional spaces $M_n(\bC_1 \langle z\rangle)$.

Assume for contradiction that \ref{it:linmatpolys} holds but \ref{it:matrange} does not hold at some level $n_0$.
Then there is some $\varepsilon > 0$ such that $d_H(\cW_{n_0}(\xi_k), \cW_{n_0}(\xi_\infty)) > \varepsilon$ for infinitely many $k$, and by passing to a subsequence we may assume that this inequality holds for all $k$.
Therefore, for every $k$, at least one of the following two options happens:
\begin{enumerate}[label={\textup{(\arabic*)}}] 
\item\label{option:1} $\max_{A \in \cW_{n_0}(\xi_k)} d(A, \cW_{n_0}(\xi_\infty)) > \varepsilon$, or
\item\label{option:2} $\max_{A \in \cW_{n_0}(\xi_\infty)} d(A, \cW_{n_0}(\xi_k)) > \varepsilon$.
\end{enumerate}
Let us assume that the first option happens infinitely many times.
By passing to a subsequence yet again, we assume it holds for all $k$, so we actually have that for all $k$, there is an $A^{(k)} \in \cW_{n_0}(\xi_k)$ such that $d(A^{(k)}, \cW_{n_0}(\xi_\infty)) > \varepsilon$.

By Lemma \ref{lem:cont_matrange}, there
exist a fixed $\delta > 0$ and polynomials $p_{k}$ in the (compact) unit ball of $M_{n_0}(\bC_1 \langle z \rangle)$ such that for all $k$ and all $B \in \cW(\xi_\infty)$,
\[
\left \| p_{k}(\xi_k) \right\| \geq \left\|p_{k}\left(A^{(k)}\right)\right\|  > \left\|p_{k}(B)\right\| + \delta .
\]
Letting $p$ be a limit point of the sequence $(p_{k})_k$, and using the joint boundedness of the $\xi_k$, we have
\[
\left \| p(\xi_k) \right\| \geq \left \| p_{k}(\xi_k) \right \| - \left \| p_{k}(\xi_k)  - p(\xi_k) \right \| > \left\|p(B)\right\| + \delta/2,
\]
for all $k$ in an infinite subset $K \subseteq \bN$. From $\|p(\xi_\infty)\|=\sup_{B\in \cW(\xi_\infty)}\|p(B)\|$, we conclude
$\left \| p(\xi_k) \right\| \geq \left\|p(\xi_\infty)\right\| + \delta/2$
for all $k\in K$, which is a contradiction to \ref{it:linmatpolys}.

If option \ref{option:2}, that is, $\max_{A \in \cW_{n_0}(\xi_\infty)} d(A, \cW_{n_0}(\xi_k)) > \varepsilon$, happens for infinitely many values of $k$, then again, by passing to a subsequence, we assume that this happens for all $k$.
We now argue in a similar manner to the case of option \ref{option:1}. 
As above, letting $A^{(k)} \in \cW_{n_0}(\xi_\infty)$ be such that $d(A^{(k)}, \cW_{n_0}(\xi_k)) > \varepsilon$, we find a $\delta > 0$ and a sequence $p_{k}$ in the unit ball of $\in M_{n_0}(\bC_1 \langle z \rangle)$ such that
\[
\left \| p_{k}(\xi_\infty) \right\| \geq \left\|p_{k}\left(A^{(k)}\right)\right\|  > \left\|p_{k}(B)\right\| + \delta
\]
for all $k$ and all $B \in \cW(\xi_k)$. 
Thus, $\|p_k(\xi_\infty)\| \geq \|p_k(\xi_k)\| + \delta$ for all $k$. 
Letting $p$ be a limit point of the sequence $(p_{k})_k$, we obtain
\[
\left \| p(\xi_\infty) \right\|  \geq \left\|p(\xi_k)\right\| + \delta/2,
\]
for all $k$ in an infinite subset $K \subseteq \bN$, which is a contradiction to \ref{it:linmatpolys}.

We now prove that \ref{it:matrange} $\Rightarrow$ \ref{it:linmatpolys}.
First, let us assume that there is some compact neighborhood of $0$ contained in $\operatorname{int}\cW_1(\xi_\infty)$, which implies that there is a neighborhood contained in all $\cW(\xi_k)$ for $k$ sufficiently large.
Fix $k \in \bN$ and a nonzero $p \in M_k(\bC_1\langle z \rangle)$.
Fix also some $\varepsilon > 0$. Let $Y$ be the compression of $\xi_\infty$ to some $n$-dimensional subspace such that $\|p(Y)\| > \|p(\xi_\infty)\| - \varepsilon$.
Since $\cW_n(\xi_k) \to \cW_n(\xi_\infty)$, we have $d_H(\cW_n(\xi_k), \cW_n(\xi_\infty)) < \frac{\varepsilon}{\|p\|}$ for all sufficiently large $k$.
For every such $k$, we can find $X \in \cW_n(\xi_k)$ such that $d(X,Y) < \frac{\varepsilon}{\|p\|}$, and therefore
\[
\|p(\xi_k)\| \geq \|p(X)\| \geq \|p(Y)\| - \|p(X)-p(Y)\| > \|p(\xi_\infty)\| - 2\varepsilon.
\]
Thus, we proved that $\liminf_{k \to \infty}\|p(\xi_k)\| \geq \|p(\xi_\infty)\|$ (we have not yet made use of the assumption on the interior).

In order to prove that $\limsup_{k \to \infty}\|p(\xi_k)\| \leq \|p(\xi_\infty)\|$, we assume to the contrary that there is some $n$, some $p \in M_n(\bC_1\langle z \rangle)$ and some $\varepsilon > 0$, such that for infinitely many $k$,
\[
\|p(\xi_k)\| > \|p(\xi_\infty)\| + \varepsilon.
\]
We now we make use of the assumption that $0 \in \operatorname{int}\cW_1(\xi_\infty)$.
Since $\cW_{2n}(\xi_k) \to \cW_{2n}(\xi_\infty)$, we can use Lemma \ref{lem:close_implies_dilation} to find a $c$ very close to $1$ such that $\cW_{2n}(\xi_k) \subseteq c\cW_{2n}(\xi_\infty)$ for all large enough $k$.
By \cite[Theorem 2.5]{Zalar} (combined with
\cite[Propositions 3.1 and 3.3, Lemma 3.4]{DDSS17} to make the transition from spectrahedra to matrix ranges), the map $c \xi_{\infty,i} \mapsto \xi_{k,i}$ ($i=1,\ldots, d$) extends to a $2n$-positive and unital map between the respective operator systems.
But a unital $2n$-positive map is $n$-contractive (see, e.g., \cite[Proposition 3.2]{PauBook}).
We therefore have for all large $k$,
\[
\|p(\xi_k)\| \leq \|p(c\xi_\infty)\| \leq \|p(\xi_\infty)\| + (c-1)\|p\| < \|p(\xi_k)\| - \varepsilon + (c-1)\|p\|.
\]
Since $c \searrow 1$ as $k \to \infty$, this is a contradiction.
Therefore, $\limsup_{k \to \infty}\|p(\xi_k)\| \leq \|p(\xi_\infty)\|$.

Finally, we show that \ref{it:matrange} $\Rightarrow$ \ref{it:linmatpolys} holds also without
the assumption $0 \in \operatorname{int}\cW_1(\xi_\infty)$.
If $\cW_1(\xi_\infty) \subseteq \bR^d$ has no interior, then there exists an affine subspace of minimal dimension in $\bR^d$ containing $\cW_1(\xi_\infty)$.
We shall treat the case that this subspace has dimension $m$, where $0<m<d$; the case $m=d$ was treated above (up to a shift), and the case $m=0$ is easy and omitted.
By an affine change of variables (see
\cite[Section 3.1]{PSS18}), we may assume that $\cW_1(\xi_\infty) \subseteq \bR^m \subseteq \bR^d$ (the subspace consisting of tuples $(x_1, \ldots x_d)$ such that $x_i = 0$ for $i>m$), that $\xi_\infty = (\xi_{\infty,1}, \ldots, \xi_{\infty,m}, 0, \ldots, 0)$, and that $0 \in \operatorname{int}\cW_1(\widetilde{\xi_\infty})$, where $\widetilde{\xi_\infty} = (\xi_{\infty,1}, \ldots, \xi_{\infty,m})$.
Let also $\widetilde{\xi_k} = (\xi_{k,1}, \ldots, \xi_{k,m})$.
Since $\cW(\widetilde{\xi_k})$ is the projection of $\cW(\xi_k)$ onto the first $m$ variables, we get from \ref{it:matrange} that $\cW_n(\widetilde{\xi_k}) \to \cW_n(\widetilde{\xi_\infty})$ for all $n$.
By the previous paragraph, this implies that $\|p(\widetilde{\xi_k})\| \to \|p(\widetilde{\xi_\infty})\|$ for all $p \in M_n(\bC_1\langle \widetilde{z} \rangle)$.
But since $\cW_1(\xi_\infty) \subseteq \bR^m$ and $\cW_1(\xi_k) \to \cW_1(\xi_\infty)$, we also get that $\|\xi_{k,j}\| \xrightarrow{k\to \infty} 0$ for all $j = m+1, \ldots, d$ (because the norm of a selfadjoint operator is attained by a state).
From this it follows that $\|p({\xi_k})\| \to \|p({\xi_\infty})\|$ for all $p \in M_n(\bC_1\langle {z} \rangle)$, and the proof of \ref{it:matrange} $\Rightarrow$ \ref{it:linmatpolys} is complete.
\end{proof}

Clearly \ref{it:linmatpolys} cannot imply \ref{it:polys}, because there exist completely isometric operator systems that generate nonisomorphic C*-algebras.
However, using Arveson's boundary theory, we can show that if the tuples are in some sense rigid, then the implication \ref{it:linmatpolys} $\Rightarrow$ \ref{it:polys} does hold.
Although we do not require it in the sequel, we record this interesting fact in a special case.

\begin{theorem}\label{thm:convergences_unitary}
Let $\cT$, $\xi_t$, and $\cA_t$ be as in Theorem \ref{thm:convergences}.
If all the $\xi_{t,i}$ (for all $t \in \cT$ and $i =1, \ldots, d$) are unitary, then \ref{it:linmatpolys} implies \ref{it:polys} (and therefore also \ref{it:matpolys}).
\end{theorem}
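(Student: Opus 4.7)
The plan is to combine Arveson's boundary theory (applied on the $\infty$-fibre) with a quantitative Choi-type argument. By Theorem~\ref{thm:convergences}, condition~\ref{it:linmatpolys} is equivalent to the Hausdorff convergence $\cW_n(\xi_t) \to \cW_n(\xi_\infty)$ at every level $n$, so the task is to upgrade this to $\|p(\xi_t)\| \to \|p(\xi_\infty)\|$ for every $p \in \bC\langle z, z^*\rangle$. Reducing as in the proof of Theorem~\ref{thm:convergences}, I work with $\cT = \hat{\bN}$ and $t_0 = \infty$.

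\emph{Step 1 (C*-envelope identification).} The first task is to exploit unitarity to show $\cA_t \cong C^*_e(\cS_t)$, where $\cS_t = \spn\{1, \xi_{t,i}, \xi_{t,i}^* : 1 \le i \le d\}$. For any irreducible $*$-representation $\pi \colon \cA_t \to B(H_\pi)$ and any UCP map $\phi \colon \cA_t \to B(H_\pi)$ agreeing with $\pi$ on $\cS_t$, unitarity of $\xi_{t,i}$ gives
\[
\phi(\xi_{t,i})\phi(\xi_{t,i})^* = \pi(\xi_{t,i})\pi(\xi_{t,i})^* = 1 = \phi(\xi_{t,i}\xi_{t,i}^*),
\]
and the analogous identity with the order reversed. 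By Choi's theorem both $\xi_{t,i}$ and $\xi_{t,i}^*$ lie in the multiplicative domain of $\phi$, so $\phi = \pi$ on the $*$-algebra they generate and hence on $\cA_t$. Consequently every irreducible $*$-rep of $\cA_t$ is a boundary representation for $\cS_t$, the Shilov boundary ideal is $\{0\}$, and $\cA_t \cong C^*_e(\cS_t)$.

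\emph{Step 2 (UCP maps from nearby matrix ranges).} After an affine shift ensuring $0 \in \operatorname{int}\cW_1(\xi_\infty)$ (the degenerate case is treated exactly as in the last paragraph of the proof of Theorem~\ref{thm:convergences}), Lemma~\ref{lem:close_implies_dilation} yields, for every $\varepsilon > 0$ and all sufficiently large $t$, the two-sided inclusions $\cW_n(\xi_t) \subseteq (1+\varepsilon)\cW_n(\xi_\infty)$ and $\cW_n(\xi_\infty) \subseteq (1+\varepsilon)\cW_n(\xi_t)$ at every level $n$. By \cite[Theorem 2.5]{Zalar} (invoked as in the proof of Theorem~\ref{thm:convergences}), the first inclusion produces a UCP map $\Phi_t \colon \cS_\infty \to \cS_t$ with $\Phi_t(\xi_{\infty,i}) = (1+\varepsilon)^{-1}\xi_{t,i}$, and Arveson's extension theorem lifts it to a UCP map $\widetilde{\Phi}_t \colon \cA_\infty \to B(\cH)$ with $\cH \supseteq \cA_t$; the second inclusion symmetrically yields $\widetilde{\Psi}_t$ in the opposite direction.

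\emph{Step 3 (Quantitative multiplicativity and conclusion).} Because $\widetilde{\Phi}_t$ sends the unitary $\xi_{\infty,i}$ to the near-unitary $(1+\varepsilon)^{-1}\xi_{t,i}$, the Schwarz defect
\[
\bigl\|\widetilde{\Phi}_t(\xi_{\infty,i}\xi_{\infty,i}^*) - \widetilde{\Phi}_t(\xi_{\infty,i})\widetilde{\Phi}_t(\xi_{\infty,i})^*\bigr\| = 1 - (1+\varepsilon)^{-2}
\]
is $O(\varepsilon)$. Writing the Stinespring dilation $\widetilde{\Phi}_t(a) = V^*\rho(a)V$, this forces $\|(I-VV^*)\rho(\xi_{\infty,i})V\| = O(\sqrt{\varepsilon})$, and an iterated telescoping argument produces, for each fixed $*$-polynomial $p$,
\[
\bigl\|p\bigl(\widetilde{\Phi}_t(\xi_\infty)\bigr) - \widetilde{\Phi}_t\bigl(p(\xi_\infty)\bigr)\bigr\| \le C_p \sqrt{\varepsilon},
\]
with $C_p$ depending only on $p$. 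Combining with $\|\widetilde{\Phi}_t\| \le 1$, this gives $\|p((1+\varepsilon)^{-1}\xi_t)\| \le \|p(\xi_\infty)\| + C_p\sqrt{\varepsilon}$; using the uniform bound on $\|\xi_t\|$ to pass between $\|p((1+\varepsilon)^{-1}\xi_t)\|$ and $\|p(\xi_t)\|$, and sending first $t\to\infty$ and then $\varepsilon \to 0$, yields $\limsup_t \|p(\xi_t)\| \le \|p(\xi_\infty)\|$; the symmetric argument using $\widetilde{\Psi}_t$ gives the matching liminf, establishing~\ref{it:polys}.

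The main obstacle is the quantitative Choi--Schwarz propagation in Step~3: the single-product defect bound is classical, but iterating it along an arbitrary $*$-polynomial requires systematic bookkeeping of the defect projection $I-VV^*$, and depends essentially on the unitarity of $\rho(\xi_{\infty,i})$ to keep the constants $C_p$ from blowing up with the degree of $p$.
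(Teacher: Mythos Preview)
Your Step~2 has a genuine gap. Condition~\ref{it:matrange} (equivalently~\ref{it:linmatpolys}) gives only \emph{levelwise} Hausdorff convergence: for each fixed $n$ and $\varepsilon>0$ there is a threshold $T_{n,\varepsilon}$ with $\cW_n(\xi_t)\subseteq(1+\varepsilon)\cW_n(\xi_\infty)$ for $t>T_{n,\varepsilon}$, but the threshold may depend on $n$. You assert the inclusion ``for all sufficiently large $t$ \ldots\ at every level $n$'', i.e.\ with a single threshold independent of $n$; that is uniform-in-$n$ convergence and is not part of the hypothesis. Without it, the map $\xi_{\infty,i}\mapsto(1+\varepsilon)^{-1}\xi_{t,i}$ is, for a given $t$, only $k$-positive for those $k$ with $t>T_{k,\varepsilon}$, not completely positive. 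But your Step~3 relies on a Stinespring dilation $\widetilde\Phi_t(a)=V^*\rho(a)V$, which requires complete positivity; there is no Stinespring representation for merely $k$-positive maps, and the telescoping you describe does not go through. (The remark following the paper's proof explicitly flags uniform convergence of the levels as an \emph{extra} hypothesis under which a direct argument of this flavor becomes available.) A secondary issue: after the affine shift you invoke to put $0\in\operatorname{int}\cW_1(\xi_\infty)$, the shifted generators are no longer unitary, so the identity $\widetilde\Phi_t(\xi_{\infty,i}\xi_{\infty,i}^*)=1$ underlying your Schwarz-defect computation needs additional justification.

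The paper sidesteps the uniformity problem entirely by packing the sequence into the quotient $\cB=\prod_n\cA_n\big/\sum_n\cA_n$ and letting $u_i$ be the image of $(\xi_{n,i})_n$. Then $u$ is a $d$-tuple of honest unitaries with $\|p(u)\|=\limsup_n\|p(\xi_n)\|$ for every matrix $*$-polynomial $p$, and condition~\ref{it:linmatpolys} says precisely that $u_i\mapsto\xi_{\infty,i}$ is a unital complete isometry of operator systems --- no $\varepsilon$, no approximation, no level-by-level bookkeeping. Your Step~1 argument (trivial Shilov boundary for unitary generators) then applies once, to $C^*(u)\subseteq\cB$, and upgrades this complete isometry to a $*$-isomorphism $C^*(u)\cong\cA_\infty$, giving $\|p(\xi_\infty)\|=\limsup_n\|p(\xi_n)\|$ directly; repeating along every subsequence turns the $\limsup$ into a limit. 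So your Step~1 is the right ingredient, but it should be applied to the quotient rather than fibrewise, after which Steps~2--3 become unnecessary.
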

\begin{proof}
It suffices to prove \ref{it:linmatpolys} $\Rightarrow$ \ref{it:polys}, and to consider the case $\cT = \hat{\bN}$ and $t_0 = \infty$.
Let $\prod_n \cA_n$ be the C*-algebra of all bounded sequences $(a_n)_n$ with $a_n \in \cA_n$ endowed with the natural C*-algebra structure, and let $\sum_n \cA_n$ be the ideal of sequences $(a_n)_n$ such that $\lim_n a_n = 0$.
Let $\cB = \prod_n \cA_n / \sum_n \cA_n$ be the
quotient algebra. 
For every $i=1, \ldots, d$, let $u_i$ denote the image of the sequence $(\xi_{n,i})_n$ in $\cB$.
Then clearly $u_1, \ldots, u_d$ is a tuple of unitaries and it satisfies $\|p(u)\| = \limsup_{n\to \infty}\|p(\xi_n)\|$ for every matrix valued $*$-polynomial $p$.
If \ref{it:linmatpolys} holds, then the map $u_i \mapsto \xi_{\infty,i}$ extends to a completely isometric UCP map from the operator system generated by $u$ in $\cB$ to the operator system generated by $\xi_\infty$ in $\cA_\infty$.
By \cite[Corollary 2.2.8]{Arv69}, the Shilov boundary of an operator system generated by unitaries, relative to the C*-algebra they generate, is trivial.
Therefore, by \cite[Theorem 2.2.5]{Arv69}, the map $u_i \mapsto \xi_{\infty,i}$ is actually implemented by a $*$-isomorphism from $C^*(u) \subseteq \cB$ onto $\cA_\infty$.
This implies $\|p(\xi_\infty)\| = \|p(u)\| = \limsup_{n\to \infty}\|p(\xi_n)\|$ for every $*$-polynomial $p$.

In order to show \ref{it:polys}, we need to show that the limit superior is actually a limit. However, for every subsequence $(\xi_{n_k})_k$, we can repeat the above argument with $\cB = \prod_k \cA_{n_k} / \sum_k \cA_{n_k}$ to find that $\|p(\xi_\infty)\| = \limsup_{k\to \infty}\|p(\xi_{n_k})\|$. This shows that the limit exists, and that $\|p(\xi_\infty)\| = \lim_{n\to \infty}\|p(\xi_n)\|$ for every $*$-polynomial $p$, proving \ref{it:polys}.
\end{proof}

\begin{remark}
The use of Arveson's boundary theory (perhaps without explicitly referring to it) to pass from complete isometries to $*$-isomorphisms, has appeared in the literature under the name ``linearization trick''; see \cite{Pis19}. 
It was used by Haagerup and Thorbj{\o}rnsen \cite[Theorem 2.2]{HT05} to pass from strong convergence on linear matrix valued polynomials to strong convergence on arbitrary polynomials. 
Haagerup and Thorbj{\o}rnsen, in turn, were inspired by Pisier's simple proof of Kirchberg's theorem \cite{Pis96}. 
Our use of the
quotient $\prod_n \cA_n / \sum_n \cA_n$ to pass from asymptotic expressions to complete isometries is inspired by the proof of \cite[Proposition 7.3]{HT05} (though this idea surely appears elsewhere as well). 
If $\cW(\xi_t)$ all contain some neighborhood of the origin, and if the convergence of the levels $\cW_n(\xi_t) \to \cW_n(\xi_{t_0})$ is uniform in $n$, then an alternative proof can be given using the methods of \cite[Section 4]{GS20}.
\end{remark}

\section{Random matrix ranges}\label{sec:matrix-range-free}

We begin by making a general observation which follows from the results of the previous section.

\begin{theorem}\label{thm:random_matrange}
Let $T^N = (T_1^N, \ldots, T_d^N)$ be a random matrix ensemble and $t = (t_1, \ldots, t_d)$ a $d$-tuple of operators on a Hilbert space.
Suppose that
\[
\lim_{N\to\infty}\|p(T^{N})\|=\|p(t)\|
\]
almost surely, for all $p \in \bC\langle z, z^* \rangle$. Then for all $n \in \bN$, $d_H(\cW_n(T^{N}),\cW_n(t)) \xrightarrow{N \to \infty} 0$, almost surely.
\end{theorem}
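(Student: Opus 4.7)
The plan is to reduce the theorem to a pathwise application of the implication \ref{it:polys} $\Rightarrow$ \ref{it:matrange} from Theorem \ref{thm:convergences}. The only nontrivial issue is that the hypothesis supplies, for each fixed $*$-polynomial $p$, its own probability-one event on which $\|p(T^N)\| \to \|p(t)\|$, whereas applying Theorem \ref{thm:convergences} requires a single probability-one event on which this convergence holds simultaneously for \emph{every} $p \in \bC\langle z, z^*\rangle$. The main (and essentially the only) obstacle is therefore a standard countability-and-density argument to upgrade from ``for each $p$, almost surely'' to ``almost surely, for all $p$''.

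First, I would fix a countable $\bQ+i\bQ$-linear set $\cP \subseteq \bC\langle z, z^*\rangle$ consisting of $*$-polynomials whose coefficients lie in $\bQ + i\bQ$; note that $\cP$ is dense in $\bC\langle z, z^*\rangle$ in the sense that every $p \in \bC\langle z, z^*\rangle$ can be approximated, monomial by monomial with the same multidegree pattern, by elements of $\cP$. For each $p \in \cP$, the hypothesis yields an event $\Omega_p$ of probability one on which $\|p(T^N)\| \to \|p(t)\|$, and the countable intersection
\[
\Omega \;:=\; \bigcap_{p \in \cP} \Omega_p
\]
still has probability one.

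Second, I would argue that on $\Omega$, the convergence $\|p(T^N)\| \to \|p(t)\|$ actually holds for every $p \in \bC\langle z, z^*\rangle$. Applying the hypothesis to the linear monomials $z_i$ and $z_i^*$ (which lie in $\cP$) gives $\|T^N_i\| \to \|t_i\|$ on $\Omega$, so on $\Omega$ the norms of the tuples $T^N$ stay bounded by some deterministic constant $C = C(\omega)$, as do the norms of $t$. For any $p \in \bC\langle z, z^*\rangle$ of degree $k$ and any $\varepsilon > 0$, pick $q \in \cP$ of the same (or higher) degree whose coefficients approximate those of $p$ to within a tolerance depending on $C$, $k$ and $\varepsilon$; then
\[
\bigl|\|p(T^N)\| - \|p(t)\|\bigr| \leq \|p(T^N) - q(T^N)\| + \bigl|\|q(T^N)\| - \|q(t)\|\bigr| + \|q(t) - p(t)\|,
\]
where the first and third terms are bounded by $\varepsilon$ uniformly in $N$ by the choice of $q$, and the middle term is eventually less than $\varepsilon$ by $\omega \in \Omega$ and $q \in \cP$. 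Hence $\|p(T^N)\| \to \|p(t)\|$ on $\Omega$ for every $*$-polynomial $p$.

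Finally, for each $\omega \in \Omega$ I would apply Theorem \ref{thm:convergences} to the deterministic family obtained by setting $\cT = \hat\bN$, $\xi_N = T^N(\omega)$ with $\cA_N = \rC^*(T^N(\omega))$ for $N \in \bN$, and $\xi_\infty = t$ with $\cA_\infty = \rC^*(t)$. The previous paragraph verifies condition \ref{it:polys} at $t_0 = \infty$ for this family, and Theorem \ref{thm:convergences} then gives condition \ref{it:matrange}, namely $d_H(\cW_n(T^N(\omega)), \cW_n(t)) \to 0$ for every $n \in \bN$. Since this holds for every $\omega$ in the probability-one event $\Omega$, the conclusion is established almost surely.
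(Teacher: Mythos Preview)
Your proof is correct and follows the same route as the paper: apply the implication \ref{it:polys} $\Rightarrow$ \ref{it:matrange} of Theorem \ref{thm:convergences} pathwise. The paper's own proof is a single sentence doing exactly this; the additional countability-and-density argument you supply to pass from ``for each $p$, almost surely'' to ``almost surely, for all $p$'' is a point the paper leaves implicit, and your treatment of it is sound (the slightly odd phrase ``deterministic constant $C=C(\omega)$'' aside, your meaning is clear and the argument works since convergent sequences are bounded).
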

\begin{proof}
For every element $\omega$ in the underlying probability space for which $\lim_{N\to\infty}\|p(T^{N}(\omega))\|=\|p(t)\|$, we apply \ref{it:polys} $\Rightarrow$ \ref{it:matrange} of Theorem \ref{thm:convergences}.
\end{proof}

When combined with known results on strong convergence of matrix ensembles, the above theorem immediately implies convergence of matrix ranges of these matrix ensembles.

We say that a family $X^N = (X^N_1, \ldots, X^N_d)$ of random complex $N \times N$ matrices is a \emph{Wigner ensemble} if:
\begin{itemize}
\item the matrix $X^N_k$ is selfadjoint for all $k$ and $N$;
\item for all $N\in\mathbb N$, the real valued random variables $(X^N_k)_{ii}$, $\re (X^N_k)_{ij}$ and $\im (X^N_k)_{ij}$ ($k=1, \ldots, d$; $1\leq i< j \leq N$) are independent with expectation $0$;
\item $\sqrt{N} (X^N_k)_{ii}$, ($k=1, \ldots, d$; $i=1, \ldots, N$; $N=1, 2, \ldots$) are identically distributed;
\item $\sqrt{2N} \re (X^N_k)_{ij}$ and $\sqrt{2N}  \im (X^N_k)_{ij}$ ($k=1, \ldots, d$; $1\leq i < j \leq N$; $N=1, 2, \ldots$) are identically distributed with variance $1$.
\end{itemize}
In particular, we assume a Wigner ensemble to fulfill $\bE(|(X^N_{k})_{ij}|^2) = 1/N$ for $i\neq j$. 

A \emph{free semicircular family} is a $d$-tuple $s = (s_1, \ldots, s_d)$ of selfadjoint elements in a C*-probability space $(\cA, \tau, \|\cdot\|)$ such that:
\begin{itemize}
\item $s_1, \ldots, s_d$ are \emph{freely independent} (meaning that whenever $x_{k} \in \alg(1,s_{i_k})$, $i_k \neq i_{k+1}$ and $\tau(x_{k}) = 0$ for all $k$, then $\tau(x_{1} \cdots x_{n}) = 0$);
\item every $s_i$ is a \emph{semicircular element} (meaning that $\tau(s_i^k) = \frac{1}{2\pi} \int_{-2}^2 t^k  \sqrt{4-t^2} \mathrm{d}t$ for all $i$ and all $k$).
\end{itemize}

It is well known that Wigner ensembles converge in distribution to free semicircular families \cite{VDNBook}. It is therefore natural to guess that if the matrix ranges $\cW(X^N)$ of a Wigner ensemble converge to anything, then they converge to the matrix range $\cW(s)$ of a free semicircular family $s$. This is indeed the case.

\begin{theorem}\label{thm:wigner}
Let $X^N = (X^N_1, \ldots, X^N_d)$ be a Wigner ensemble and assume that $\bE(|(X^N_{k})_{ij}|^4) < \infty$ for all $i,j$. Let $s = (s_1, \ldots, s_d)$ be a semicircular $d$-tuple in a C*-probability space. Then the matrix range $\cW(X^N)$ converges levelwise in the Hausdorff metric to $\cW(s)$, almost surely, that is, for all $n$,
\[
\lim_{N \to \infty} d_H(\cW_n(X^N),\cW_n(s)) = 0 \,\, , \,\, a.s.
\]
\end{theorem}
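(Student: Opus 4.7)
The plan is to combine Theorem~\ref{thm:random_matrange} with an existing strong convergence result for Wigner ensembles. Since Theorem~\ref{thm:random_matrange} already reduces Hausdorff convergence of every level $\cW_n$ to almost-sure norm convergence $\|p(X^N)\|\to\|p(s)\|$ for every scalar $*$-polynomial $p\in\bC\langle z,z^*\rangle$, the only thing to supply is exactly that input.

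The relevant input is Anderson's theorem~\cite{And13}, cited in the introduction, which asserts that for a Wigner ensemble $X^N=(X_1^N,\ldots,X_d^N)$ satisfying a finite fourth moment hypothesis $\bE(|(X^N_k)_{ij}|^4)<\infty$ on the off-diagonal entries (and the matching normalization of variance built into our definition of a Wigner ensemble), one has
\[
\lim_{N\to\infty}\|p(X^N)\|=\|p(s)\|\quad\text{almost surely,}
\]
simultaneously for every $p\in\bC\langle z,z^*\rangle$, where $s=(s_1,\ldots,s_d)$ is a free semicircular tuple. So first I would carefully check that the hypotheses of Anderson's theorem are met by our definition of Wigner ensemble: the selfadjointness, the independence of real and imaginary parts of distinct entries, the identical distribution of (rescaled) diagonal and off-diagonal entries, the zero mean, the variance $1/N$ off the diagonal, and the fourth moment assumption.

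Given Anderson's convergence, the rest is immediate: for almost every $\omega$ in the underlying probability space one has $\|p(X^N(\omega))\|\to\|p(s)\|$ for every $p\in\bC\langle z,z^*\rangle$ (a countable intersection of almost-sure events, obtained by letting $p$ range over a countable dense subset of $\bC\langle z,z^*\rangle$, extended by uniform boundedness of $\|X^N\|$ to all $p$ in the usual fashion). Theorem~\ref{thm:random_matrange} then directly gives $d_H(\cW_n(X^N(\omega)),\cW_n(s))\to 0$ for every $n$.

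The only delicate point — really the one place a reader might hesitate — is ensuring that a \emph{single} almost-sure event suffices for convergence of $\|p(X^N)\|$ uniformly over all $p$; this is handled by the standard argument of choosing a countable dense family of polynomials of bounded degree and invoking the a.s.\ uniform boundedness $\sup_N\|X^N\|<\infty$ (itself a consequence of Anderson's theorem applied to $p(z)=\sum z_iz_i^*$). Once the almost-sure event is fixed, Theorem~\ref{thm:random_matrange} applies pointwise on that event, completing the proof. I do not expect any substantive obstacle beyond the careful bookkeeping of the almost-sure event.
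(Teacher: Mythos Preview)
Your proposal is correct and follows exactly the paper's approach: invoke Anderson's strong convergence result and feed it into Theorem~\ref{thm:random_matrange}. The paper's own proof is a one-liner citing \cite{And13} and Theorem~\ref{thm:random_matrange}; your extra remarks about passing to a single almost-sure event via a countable dense family of polynomials and the a.s.\ bound on $\|X^N\|$ are a welcome bit of bookkeeping that the paper leaves implicit.
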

\begin{proof}
The theorem follows immediately from Theorem \ref{thm:random_matrange} together with the remarkable result \cite{And13} of Anderson that the limit $\lim_{N\to\infty}\|p(X^{N})\|=\|p(s)\|$ holds almost surely for every $*$-polynomial $p$.
\end{proof}

\begin{remark}
Anderson's result was proved first for the GUE ensemble by Haagerup and Thorbj{\o}rnsen \cite{HT05}.
The strong convergence $\lim_{N\to\infty}\|p(X^{N})\|=\|p(s)\|$ was proved for the GOE and GSE cases by Schultz \cite{Sch05}, and for certain symmetric Wigner ensembles ``that satisfy a Poincar\'{e} inequality'' by Capitaine and Donati-Martin \cite{CD-M07}, and thus the convergence of matrix ranges also holds in these cases.
\end{remark}

Above, we covered perhaps the most natural model by which one may generate a random $d$-tuple of matrices, which is by independently sampling the entries of $d$ selfadjoints from a fixed distribution. Another very natural way to generate a random $d$-tuple of matrices is to sample $d$ independent unitaries from the Haar measure on the compact group $\cU_N$ of $N \times N$ unitaries.

Recall that a \emph{free tuple of Haar unitaries} is a $d$-tuple $u = (u_1, \ldots, u_n)$ of unitaries in a C*-probability space $(\cA, \tau, \|\cdot\|)$ which are freely independent and satisfy $\tau(u_i^k) = 0$ for all $i$ and all $k \in \bZ \setminus \{0\}$.

\begin{theorem}\label{thm:haar}
Let $U^N = (U^N_1, \ldots, U^N_d)$ be an ensemble of $d$ independent $N \times N$ unitaries distributed according to the Haar measure in $\cU_N$, and let $u = (u_1, \ldots, u_d)$ be a free tuple of Haar unitaries in a C*-probability space. Then the matrix range $\cW(U^N)$ converges levelwise in the Hausdorff metric to $\cW(u)$, almost surely, that is, for all $n$,
\[
\lim_{N \to \infty} d_H\left(\cW_n(U^N),\cW_n(u)\right) = 0 \,\, , \,\, a.s.
\]
\end{theorem}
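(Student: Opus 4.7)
The proof should be essentially a one-line application of already-assembled machinery, mirroring the Wigner case. The plan is to invoke Theorem \ref{thm:random_matrange} with $T^N = U^N$ and $t = u$; the only thing to verify is the hypothesis of that theorem, namely the almost-sure strong convergence
\[
\lim_{N\to\infty}\|p(U^N)\| = \|p(u)\| \qforal p \in \bC\langle z, z^*\rangle.
\]

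This is precisely the Collins--Male theorem \cite{CM14}, which the introduction already cites in the same breath as Anderson's theorem as the second ingredient feeding into the main results. Collins and Male prove that independent Haar-distributed unitaries (together, if desired, with deterministic matrices having a strong limit) converge strongly to a free family of Haar unitaries, which in the purely random case reduces exactly to the statement above.

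Once that ingredient is in hand, the argument is identical to the proof of Theorem \ref{thm:wigner}: for every sample point $\omega$ in the underlying probability space at which the strong convergence $\|p(U^N(\omega))\| \to \|p(u)\|$ holds for all $*$-polynomials $p$, the implication \ref{it:polys} $\Rightarrow$ \ref{it:matrange} of Theorem \ref{thm:convergences} yields $d_H(\cW_n(U^N(\omega)), \cW_n(u)) \to 0$ for every $n$. Since the relevant event has full probability (it is the countable intersection, over $p$ in a countable dense subset of $\bC\langle z, z^*\rangle$, of full-measure events, and uniform bounds on $\|U^N\|$ propagate the convergence to all $p$), the almost sure conclusion follows.

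There is no substantive obstacle here: all the heavy lifting was done in Section \ref{sec:cont-matr-range} (to produce Theorem \ref{thm:random_matrange}) and in \cite{CM14} (to supply strong asymptotic freeness of Haar unitaries). The only tiny care needed is to observe that the almost sure set on which strong convergence holds for every $*$-polynomial can be chosen independent of $p$, which is standard: take a countable $\bQ[i]$-linear basis of $\bC\langle z, z^*\rangle$, intersect the corresponding full-measure events, and use the fact that $\|U^N_k\|=1$ for all $k, N$ to upgrade polynomial-by-polynomial convergence on the basis to uniform control over bounded-degree polynomials.
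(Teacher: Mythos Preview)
Your proposal is correct and matches the paper's proof essentially verbatim: the paper also derives the result immediately from Theorem \ref{thm:random_matrange} combined with the Collins--Male strong convergence theorem \cite{CM14}. The extra remark about assembling a single full-measure event via a countable basis is harmless (and already implicit in the hypothesis of Theorem \ref{thm:random_matrange}), but the paper simply cites Collins--Male for the almost-sure convergence for every $*$-polynomial and leaves it at that.
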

\begin{proof}
This follows from Theorem \ref{thm:random_matrange} together with another remarkable result, this time due to Collins and Male \cite{CM14}, that $\lim_{N\to\infty}\|p(U^{N})\|=\|p(u)\|$, almost surely, for every $*$-polynomial $p$.
\end{proof}

\section{Description of the limiting matrix ranges}\label{sec:description}

In the previous section, we found that $\cW(X^N)$ converges to $\cW(s)$ for independent Wigner ensembles of selfadjoints, and that $\cW(U^N)$ converges to $\cW(u)$ for independent Haar distributed ensembles of unitaries. In this section, we will try to identify $\cW(s)$ and $\cW(u)$ in order to understand what the limiting matrix ranges ``look like''. The description we give will be based on the work of Lehner \cite{Leh99}.

We will describe a matrix ball $\cL \subseteq \bM^d_{\sa}$ that we will call the {\em Lehner ball}, that satisfies $\cW(\frac{s}{2}) = \cL^\bullet$.
The reason that we call it a ``matrix ball'' is that it is a matrix convex set that has at its first level the closed Euclidean ball of radius one.
In the next section we will show that $\cL$ is not any one of the usual suspects, that is $\cL \notin \{\fB, \fB^\bullet, \fD, \Wmin{}(\ol{\bB}_d), \Wmax{}(\ol{\bB}_d)\}$.
Equivalently, $\cW(s/2)$ is not any one of the above familiar matrix balls.

We need to recall a concrete realization of the semicircular tuple
$s$. 
Let $\cF(\bC^d) = \bigoplus_{n=0}^\infty (\bC^d)^{\otimes n}$ be the full Fock space over $\bC^d$. Denote by $\Omega$ the vector $\Omega :=1 \in \bC = (\bC^d)^{\otimes 0}$ in the first summand. Let $\{e_1, \ldots, e_d\}$ be the standard orthonormal basis of $\bC^d$, and for $i=1, \ldots, d$ define the shift operator by its action on elementary tensors:
\[
\ell_i (x_1 \otimes \cdots \otimes x_n) = e_i \otimes x_1 \otimes \cdots \otimes x_n \quad , \quad n = 1, 2, \ldots
\]
and $\ell_i(\Omega) = e_i$. It is straightforward to check that $\ell_1, \ldots, \ell_d$ are isometries with pairwise orthogonal ranges. On the C*-algebra $\cT(\bC^d) = C^*(\ell_1, \ldots, \ell_d)$ we define the state $\tau(a) = \langle a \Omega, \Omega \rangle$. Voiculescu proved \cite{Voi85} that the tuple $s = (s_1, \ldots, s_d)$ defined by $s_i = \ell_i+ \ell_i^*$ ($i=1, \ldots, d$) is a free $d$-tuple of semicirculars.

We define the \emph{Lehner ball} to be the set
\[
\cL := \cD_{s/2}^{\sa} = \cW(s/2)^\bullet = \left\{X \in \bM^d_{\sa} : \sum X_i \otimes \frac{s_i}{2} \leq I\right\}.
\]
Since $\ell_i \mapsto -\ell_i$ gives rise to an automorphism of $\cT(\bC^d)$ that maps $s$ to $-s$, we have
\[
\cL = \left\{X \in \bM^d_{\sa} : \left\|\sum X_i \otimes s_i \right\| \leq 2\right\}.
\]
By \cite[Theorem 1.3]{Leh99}, we have the alternative description
\begin{align}
  \label{eq:LehnerBallFormula}
  \cL_n = \left\{X \in (M_n^d)_{\sa} : \inf_{Z > 0} \left\| Z + \sum X_i Z^{-1} X_i \right\| \leq 2 \right\},
\end{align}
where the infimum is taken over all positive definite (invertible) $n \times n$ matrices $Z$.

\begin{proposition}\label{prop:LehnerBalllevel1}
The first level of the Lehner ball is given by $\cL_1 = \ol{\bB}_d$. Consequently, the numerical range of $s$ is given by $\cW_1(s) = 2 \ol{\bB}_d$.
\end{proposition}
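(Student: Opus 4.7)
My plan is to compute $\cL_1$ directly from Lehner's formula \eqref{eq:LehnerBallFormula} and then deduce the statement about $\cW_1(s)$ via polar duality.

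First, I specialize \eqref{eq:LehnerBallFormula} to $n=1$. Here $X=(x_1,\ldots,x_d)\in\bR^d$ and $Z$ ranges over positive scalars $z>0$, so the condition $X\in\cL_1$ becomes
\[
\inf_{z>0}\left(z+\frac{\sum x_i^2}{z}\right)\leq 2.
\]
For $x\neq 0$, the AM--GM inequality gives $z+\|x\|^2/z\geq 2\|x\|$ with equality at $z=\|x\|$; for $x=0$ the infimum is $0$ (take $z\to 0^+$). In either case the infimum equals $2\|x\|$, and the condition is equivalent to $\|x\|\leq 1$. Thus $\cL_1=\ol{\bB}_d$.

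For the second assertion, I note that $0\in\cW_1(s/2)$ because the vacuum state $\tau$ satisfies $\tau(s_i/2)=0$ for every $i$. Consequently, by the identity recorded in the introduction, $\cW(s/2)=(\cD^{\sa}_{s/2})^\bullet=\cL^\bullet$. Taking first levels and using that polar duality of selfadjoint matrix convex sets is consistent with polar duality of convex subsets of $\bR^d$ at level one, I get $\cW_1(s/2)=(\cL_1)'=(\ol{\bB}_d)'$. A short Cauchy--Schwarz computation shows $\ol{\bB}_d$ is self-dual under this duality, so $\cW_1(s/2)=\ol{\bB}_d$, and scaling by $2$ yields $\cW_1(s)=2\ol{\bB}_d$.

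There is no real obstacle here; the only things to keep track of are the one-dimensional form of \eqref{eq:LehnerBallFormula} and the fact that $0\in\cW(s/2)$ is needed to invoke $\cW(s/2)=\cL^\bullet$.
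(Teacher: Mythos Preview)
Your proposal is correct and follows essentially the same route as the paper: compute $\cL_1$ from \eqref{eq:LehnerBallFormula} at $n=1$ (the paper uses calculus where you use AM--GM), then pass to $\cW_1(s)$ via $\cW(s/2)=\cL^\bullet$ and the self-duality of $\ol{\bB}_d$. Your explicit verification that $0\in\cW(s/2)$ (needed to invoke $\cW(s/2)=\cL^\bullet$) is a nice touch that the paper leaves implicit.
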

\begin{proof}
Fix $x \in \bR^d$. By solving an elementary exercise in calculus we find that the function $f\colon (0,\infty) \to (0,\infty)$ given by
\[
f(t)  = \left| t + \sum x_i t^{-1} x_i \right| = \left| t + \frac{\|x\|_2^2}{t} \right|
\]
attains its minimum at $t = \|x\|$. Together with formula \eqref{eq:LehnerBallFormula} above, this means that $\cL_1 = \ol{\bB}_d$. Since $\cW(\frac{s}{2}) = (\cD_\frac{s}{2}^{\sa})^\bullet = \cL^\bullet$ (see \cite[Proposition 3.3]{DDSS17}), we see that $\cW_1(s) =2 \cdot (\ol{\bB}_d)' = 2 \ol{\bB}_d$.

Alternatively, we can use the first description and define $\ell_x:=\sum x_i\ell_i$ and $s_x:=\ell_x + \ell_x^*$ for $x\in \mathbb R^d$. Now it is easy to see that $s_x$ is unitarily equivalent to $\|x\| s_1$, therefore $\|\sum x_i\otimes s_i\|=\|s_x\|=\|x\|\cdot\|s_1\|=2\|x\|$.
\end{proof}

In \cite{CGLZ14}, the numerical range of large random complex matrices was investigated. For example, it was shown that if $G^N$ denotes the Ginibre ensemble, then the numerical range $W(G^N)$ converges almost surely in the Hausdorff metric to the closed unit disc of radius $\sqrt{2}$.
The definition of the numerical range used there was $W(A) = \{\langle Ah, h \rangle : \|h\| =1\}$, but it is not hard to see that this is equivalent to the notion we are using, that is $W(A) = \cW_1(A)$ (for the proof one needs the Hausdorff-Toeplitz theorem, that is, the fact that $W(A)$ is convex, together with the fact that vector states are the pure states on $M_n$).
Moreover, the Ginibre ensemble is obtained as a combination $G^N = \frac{1}{\sqrt{2}} (X_1^N + \mathrm{i} X_2^N)$ where $X_1^N, X_2^N$ are independent GUE matrices.
Thus, the above mentioned result on the convergence of the numerical range $W(G^N)$ is equivalent to the statement that $\cW_1(X^N) \xrightarrow{n\to \infty} 2 \ol{\bB}_d$ when $d=2$.
Our Theorem \ref{thm:wigner} yields the following significant generalization for $d>2$ of the result from \cite{CGLZ14}.

\begin{corollary}\label{cor:randomNumRange}
Let $X^N = (X^N_1, \ldots, X^N_d)$ be a Wigner ensemble such that $\bE(|(X^N_{k})_{ij}|^4) < \infty$ for all $i,j$. Then the numerical range $\cW_1(X^N)$ converges in the Hausdorff metric to the ball $2 \ol{\bB}_d$ almost surely.
\end{corollary}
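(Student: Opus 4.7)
The plan is to combine the almost-sure levelwise Hausdorff convergence of matrix ranges established in Theorem \ref{thm:wigner} with the explicit identification of the first level $\cW_1(s)$ obtained in Proposition \ref{prop:LehnerBalllevel1}. Since the corollary only concerns the level $n=1$, there is essentially nothing new to prove; the task is to assemble the two ingredients.

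First I would invoke Theorem \ref{thm:wigner}, whose hypotheses are exactly those of the corollary (Wigner ensemble with finite fourth moments). It yields that, almost surely, for every $n\in\bN$,
\[
\lim_{N\to\infty} d_H\bigl(\cW_n(X^N),\cW_n(s)\bigr)=0,
\]
where $s=(s_1,\ldots,s_d)$ is a free semicircular tuple. Specializing to $n=1$ gives almost sure Hausdorff convergence $\cW_1(X^N)\to\cW_1(s)$ of subsets of $\bC^d$ (or of $\bR^{2d}$ after the usual identification of real and imaginary parts, though here each $X^N_k$ and each $s_k$ is selfadjoint, so the first level sits in $\bR^d$).

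Next I would identify the limit set. By Proposition \ref{prop:LehnerBalllevel1}, $\cW_1(s)=2\ol{\bB}_d$. Substituting this into the conclusion of the previous paragraph gives
\[
\lim_{N\to\infty} d_H\bigl(\cW_1(X^N),2\ol{\bB}_d\bigr)=0 \quad \text{almost surely},
\]
which is exactly the statement of the corollary.

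There is no genuine obstacle here: both ingredients have already been proved, and the corollary is simply their juxtaposition at level $n=1$. The only subtlety worth a sentence of comment is to reassure the reader that the notion of numerical range $\cW_1$ used in Corollary \ref{cor:randomNumRange} (via UCP maps into $\bC$, i.e.\ states) agrees with the classical joint numerical range $\{(\langle X^N_1 h,h\rangle,\ldots,\langle X^N_d h,h\rangle) : \|h\|=1\}$, by the Hausdorff--Toeplitz-type fact (already noted in the paragraph preceding the corollary) that vector states exhaust the pure states of $M_N$ and that $\cW_1$ is convex; this legitimately connects the statement to the Ginibre result of \cite{CGLZ14} that motivates it.
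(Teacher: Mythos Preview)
Your proposal is correct and follows exactly the paper's own proof, which simply cites Proposition~\ref{prop:LehnerBalllevel1} together with Theorem~\ref{thm:wigner}. The additional remark about the equivalence of $\cW_1$ with the classical numerical range is also consistent with the paper's discussion preceding the corollary.
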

\begin{proof}
This follows from the Proposition \ref{prop:LehnerBalllevel1} together with Theorem \ref{thm:wigner}.
\end{proof}

What do the numerical ranges $\cW_1(U^N)$ converge to? As a particular case of Theorem \ref{thm:haar}, we know that almost surely they converge to $\cW_1(u)$, but we don't have a very clear description of $\cW_1(u)$. Its first level is certainly not a ball and not a polydisc.

\begin{proposition}\label{prop:LehnerPolydisclevel1}
Let $u$ be a $d$-tuple of free Haar unitaries. Define
\[
E := \left\{x \in \bR^d : \inf_{t>0} \left|\sum \sqrt{t^2 + x_i^2} - (d-1)t \right| \leq 1 \right\},
\]
and
\[
Q = \{z \in \bC^d : (|z_1|, \ldots, |z_d|) \in E\}.
\]
Then the numerical range $\cW_1(u)$ is given by the polar dual of $Q$,
\[
\cW_1(u) = Q' = \left\{w \in \bC^d : \re \sum w_i z_i \leq 1 \textrm{ for all } z \in Q\right\}.
\]
In particular, $0 \in \operatorname{int}\cW_1(u)$.
\end{proposition}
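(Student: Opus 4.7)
The plan is to compute $(\cD_u)_1 = (\cW(u)^\circ)_1$ explicitly, identify it with $Q$, and then invoke polar duality to conclude $\cW_1(u) = Q'$.

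First, I would reduce the defining inequality $\re \sum z_i u_i \leq 1$ of $(\cD_u)_1$ to a norm condition. The joint distribution of a free tuple of Haar unitaries is invariant under the independent phase rotations $u_i \mapsto e^{i\theta_i} u_i$, so these extend to $*$-automorphisms of $C^*(u_1,\ldots,u_d)$. In particular the spectrum of the selfadjoint element $\re \sum z_i u_i$ is symmetric about $0$, so $\re \sum z_i u_i \leq 1$ is equivalent to $\|\re \sum z_i u_i\| \leq 1$. Writing $z_i = |z_i|e^{i\theta_i}$ and then rotating $u_i \mapsto e^{-i\theta_i}u_i$, this in turn is equivalent to
\[
\left\|\sum_i |z_i|(u_i + u_i^*)\right\| \leq 2,
\]
so $(\cD_u)_1 = \{z \in \bC^d : \|\sum |z_i|(u_i+u_i^*)\| \leq 2\}$.

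Second, I would invoke Lehner's norm formula from \cite{Leh99}, specialized to free Haar unitaries with nonnegative scalar coefficients $x_i = |z_i|$, which should give
\[
\left\|\sum_i x_i(u_i + u_i^*)\right\| = 2 \inf_{t > 0}\left|\sum_i \sqrt{t^2 + x_i^2} - (d-1)t\right|,
\]
identifying $(\cD_u)_1$ with $Q$. This is the main obstacle: one must locate the precise Haar-unitary analogue of \cite[Theorem 1.3]{Leh99} and carry out the specialization to scalars. A convenient sanity check is that for $x = (1,\ldots,1)$ the infimum equals $\sqrt{2d-1}$, which by the formula gives $\|\sum(u_i+u_i^*)\| = 2\sqrt{2d-1}$ in agreement with Kesten's spectral radius.

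Third, $\cW_1(u)$ is a closed convex subset of $\bC^d$ containing $0$ (because the tracial state $\tau$ on $C^*(u)$ gives $\tau(u_i) = 0$), and the first-level compatibility of polar duality noted in Section \ref{sec:introduction} yields $(\cD_u)_1 = (\cW(u)^\circ)_1 = (\cW_1(u))'$. Thus $Q = (\cW_1(u))'$, and the bipolar theorem gives $\cW_1(u) = Q'$. Finally, from the crude bound $\sum_i \sqrt{t^2 + x_i^2} - (d-1)t \geq \sqrt{t^2 + x_j^2} \geq |x_j|$ for any fixed $j$ (using that the remaining $d-1$ square roots each dominate $t$), we obtain $E \subseteq [-1,1]^d$; in particular $Q$ is bounded, hence its polar $\cW_1(u) = Q'$ is a neighborhood of $0$.
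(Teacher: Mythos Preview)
Your proof is correct and follows essentially the same route as the paper's: reduce $(\cD_u)_1$ to a norm condition via the automorphisms $u_i\mapsto e^{i\theta_i}u_i$ (the case $\theta_i=\pi$ giving the spectral symmetry), compute that norm via Lehner (the paper invokes \cite[Theorem~1.1]{Leh99} rather than an analogue of Theorem~1.3), bound $Q$ by the same $\|x\|_\infty$ estimate, and then dualize. The only cosmetic difference is that the paper passes through $0\in\operatorname{int}\cW_1(u)$ and $\cW(u)=(\cD_u)^\circ$ to reach $\cW_1(u)=(\cD_u(1))'$, whereas you use the first-level compatibility $(\cD_u)_1=(\cW_1(u))'$ directly and then the bipolar theorem with $0\in\cW_1(u)$; both arguments are valid.
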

\begin{proof}
We begin by examining
\[
\cD_u = \left\{X \in \bM^d : \re \sum u_i \otimes X_i \leq I \right\}.
\]
We note that once we show that $\cD_u(1)$ is bounded,
it follows that 
$0 \in \operatorname{int}(\cW_1(u))$
\cite[Lemma 3.4]{DDSS17} 
and therefore also that $\cW(u) = (\cD_u)^\circ$  
 \cite[Lemma 3.2 and Proposition 3.3]{DDSS17}. 
It is then not hard to see that $\cW_1(u) = (\cD_u(1))'$. It therefore remains to show that $\cD_u(1) = Q$ as described above, and to explain why this set is bounded.

Since $u_i \mapsto - u_i$ induces a $*$-automorphism on $C^*(u)$, we have that, for all $X \in \bM^d$, $\re \sum u_i \otimes X_i \leq I$ if and only if $\left\| \re \sum u_i \otimes X_i \right\| \leq 1$. Thus,
\[
\cD_u = \left\{X \in \bM^d : \left\|\re \sum u_i \otimes X_i \right\| \leq 1 \right\},
\]
and, in particular,
\[
\cD_u(1) = \left\{x \in \bC^d : \frac{1}{2}\left| \sum u_i \otimes x_i + \sum u_i^* \otimes \ol{x}_i \right| \leq 1 \right\}.
\]
Again, since $u_i \mapsto \mathrm e^{\mathrm i\theta_i} u_i$ induces a $*$-automorphism on $C^*(u)$, we find that $x \in \cD_u(1)$ if and only if $|x| := (|x_1|, \ldots, |x_d|) \in \cD_u(1)$. But by \cite[Theorem 1.1]{Leh99}, after a little bit of rearranging, we find that for $x \in \bR^d$,
\[
\frac{1}{2}\left| \sum u_i \otimes x_i + \sum u_i^* \otimes x_i \right| = \inf_{t>0} \left|\sum \sqrt{t^2 + x_i^2} - (d-1)t \right|.
\]
This establishes that $Q = \cD_u(1)$. Since the right hand side of the above equation is always greater than $\|x\|_\infty$, we obtain that $\cD_u(1)$ is bounded.
\end{proof}

\section{Comparison of $\cW(s)$ with other popular matrix balls}

In this section, we compare $\cW(s/2)$ and $\cL$ to other matrix balls that have been considered: $\Wmin{}(\ol{\bB}_d)$, $\fB$, $\fD$, $\fB^\bullet$ and $\Wmax{}(\ol{\bB}_d)$, and we show that they are different from all of them.
\begin{lemma}
Let $a_1,\ldots, a_d$ be selfadjoint. Then $\|\sum a_i\otimes s_i\| \leq 2\|\sum a_i^2\|^{\frac{1}{2}}$.
\end{lemma}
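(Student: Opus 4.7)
The plan is to exploit the concrete realization of the free semicircular tuple on the full Fock space $\cF(\bC^d)$, namely $s_i = \ell_i + \ell_i^*$, where the creation operators satisfy the Cuntz-type relations $\ell_i^* \ell_j = \delta_{ij} I$.

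First I would set $T := \sum_i a_i \otimes \ell_i$ and observe that, since the $a_i$ are selfadjoint, $T^* = \sum_i a_i \otimes \ell_i^*$, so $\sum_i a_i \otimes s_i = T + T^*$ and consequently
\[
\left\|\sum_i a_i \otimes s_i\right\| \leq 2\|T\|.
\]

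Next I would compute $T^*T$ using $\ell_j^* \ell_i = \delta_{ij} I$:
\[
T^* T = \sum_{i,j} a_j a_i \otimes \ell_j^* \ell_i = \left(\sum_i a_i^2\right) \otimes I,
\]
which gives $\|T\| = \|T^*T\|^{1/2} = \|\sum_i a_i^2\|^{1/2}$. Substituting this into the previous inequality yields the claim. There is no real obstacle; the only subtlety is to keep track of the fact that $a_i = a_i^*$ so that the adjoint of $T$ takes the desired form.
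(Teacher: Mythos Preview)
Your proof is correct and is essentially identical to the paper's own argument: both write $s_i=\ell_i+\ell_i^*$, split $\sum a_i\otimes s_i$ as $T+T^*$ with $T=\sum a_i\otimes\ell_i$, use the relations $\ell_j^*\ell_i=\delta_{ij}I$ to get $\|T\|^2=\|\sum a_i^2\|$, and finish with the triangle inequality.
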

\begin{proof}
Since we can write $s_i=\ell_i^*+\ell_i$, where $\ell_i$ are the free shifts on full Fock space, we have
\[
\left\|\sum a_i\otimes \ell_i\right\|^2=\left\|\sum a_ia_j\otimes \ell_i^*\ell_j\right\|=\left\|\sum  a_i^2\otimes 1\right\|=\left\|\sum a_i^2\right\|.
\]
Therefore
\[
\left\|\sum a_i\otimes s_i\right\|\leq \left\|\sum a_i\otimes \ell_i\right\| + \left\|\sum a_i\otimes \ell_i^*\right\| = 2\left\|\sum a_i^2\right\|^{\frac{1}{2}}.\qedhere
\]
\end{proof}
The above lemma immediately implies that $\fB\subset \cW(s/2)^{\bullet} = \cL$ or, by duality, $\cW(s)\subset 2\fB^{\bullet}$.
In particular, $\cW(s) \neq \Wmax{}(2 \ol{\bB}_d)$, and $\cL \neq \Wmin{}( \ol{\bB}_d)$.
That the inclusion  $\fB \subset \cL$ (and therefore $\cW(s)\subset 2\fB^{\bullet}$) is strict, follows easily from the following lemma.

\begin{lemma}
  There exist selfadjoint $2\times 2$ matrices $b_1, b_2$ such that
  \[\|b_1 \otimes s_1 + b_2 \otimes s_2\| \leq 2 \text{ while } \|b_1^2 + b_2^2\| > 1.\]
\end{lemma}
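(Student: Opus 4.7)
The plan is to produce two explicit selfadjoint $2\times 2$ matrices $b_1, b_2$ and verify the two inequalities directly. By Lehner's formula \eqref{eq:LehnerBallFormula} at level $n=2$, to show $\|b_1 \otimes s_1 + b_2 \otimes s_2\| \leq 2$ it suffices to exhibit a single positive definite $Z$ with $\|Z + b_1 Z^{-1} b_1 + b_2 Z^{-1} b_2\| \leq 2$. A preliminary observation narrows the search: whenever $b_1^2 + b_2^2$ is a scalar multiple of the identity, or more generally when $b_1$ and $b_2$ commute, the bound $\|\sum b_i \otimes s_i\| \leq 2\|b_1^2 + b_2^2\|^{1/2}$ of the preceding lemma is already an equality (take $Z = (b_1^2 + b_2^2)^{1/2}$), so the candidate family must have $b_1, b_2$ non-commuting with non-scalar $b_1^2 + b_2^2$.

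A natural family with both features is
\[
b_1 = \alpha \begin{pmatrix} 0 & 1 \\ 1 & 0 \end{pmatrix}, \qquad b_2 = \alpha \begin{pmatrix} 1 & 0 \\ 0 & 0 \end{pmatrix},
\]
parametrized by $\alpha > 0$; then $b_1^2 + b_2^2 = \alpha^2 \diag(2,1)$ has norm $2\alpha^2$, strictly greater than $1$ exactly when $\alpha^2 > 1/2$. For the Lehner bound, I plug in the diagonal $Z = \diag(1, y)$ and compute
\[
Z + b_1 Z^{-1} b_1 + b_2 Z^{-1} b_2 = \diag\!\bigl(1 + \alpha^2 + \alpha^2/y,\ y + \alpha^2\bigr).
\]
Choosing $y = (1 + \sqrt{1 + 4\alpha^2})/2$ (the positive root of the balance equation $y^2 - y - \alpha^2 = 0$) makes the two diagonal entries coincide, and the common value $y + \alpha^2$ is $\leq 2$ iff $\sqrt{1 + 4\alpha^2} \leq 3 - 2\alpha^2$, which upon squaring reduces to $\alpha^4 - 4\alpha^2 + 2 \geq 0$, i.e.\ $\alpha^2 \leq 2 - \sqrt 2$.

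Since $2 - \sqrt 2 > 1/2$, the interval $(1/2,\, 2 - \sqrt 2]$ is nonempty and any $\alpha$ in it yields a valid pair. A clean concrete choice is $\alpha = 3/4$: then $\|b_1^2 + b_2^2\| = 9/8 > 1$, $y = (2 + \sqrt{13})/4$, and $\|Z + b_1 Z^{-1} b_1 + b_2 Z^{-1} b_2\| = (17 + 4\sqrt{13})/16$, which is strictly less than $2$ because $\sqrt{13} < 15/4$ (equivalent to $13 < 225/16$). The only real obstacle is conceptual---recognizing that one must break both commutativity and scalar-ness of $b_1^2 + b_2^2$ in order to open up any gap between the balls $\fB$ and $\cL$; once the family is identified, the verification is elementary $2 \times 2$ linear algebra.
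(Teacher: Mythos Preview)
Your proof is correct. Both you and the paper apply Lehner's formula \eqref{eq:LehnerBallFormula} with an explicit diagonal test matrix $Z$, but the concrete choices differ: the paper uses the family
\[
a_1=\begin{pmatrix}\sqrt t & 1\\ 1 & 1/\sqrt t\end{pmatrix},\qquad
a_2=\begin{pmatrix}\sqrt t & -1\\ -1 & 1/\sqrt t\end{pmatrix}
\]
with the ad hoc parameters $t=7$, $Z=\diag(3,7)$ and then rescales, whereas you work with the simpler pair $b_1=\alpha\sigma_x$, $b_2=\alpha\,\diag(1,0)$ and actually optimize the diagonal entry $y$ to obtain the sharp threshold $\alpha^2\le 2-\sqrt2$. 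Your argument is a bit more transparent and yields an explicit range of admissible parameters rather than a single numerical instance; the paper's version is quicker but more opaque.

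One small quibble with the motivational preamble (not the proof itself): the phrase ``or more generally when $b_1$ and $b_2$ commute'' is misleading, since $b_1^2+b_2^2$ being scalar does \emph{not} imply $[b_1,b_2]=0$ (e.g.\ $\sigma_x,\sigma_y$). The commuting case is genuinely the one where the preceding lemma's bound is seen to be sharp by diagonalization; the scalar-sum case requires a separate (though easy) check. This does not affect the validity of your construction.
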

\begin{proof}
Consider the matrices
\[
a_1 = \begin{pmatrix} \sqrt{t} & 1 \\ 1 & \frac{1}{\sqrt{t}} \end{pmatrix} \,\, , \,\, a_2 = \begin{pmatrix} \sqrt{t} & -1 \\ -1 & \frac{1}{\sqrt{t}} \end{pmatrix} .
\]
A direct calculation shows that $\| a_1^2 + a_2^2\|^{1/2} = \sqrt{2(1+t)}$.
On the other hand, by Equation \eqref{eq:LehnerBallFormula} we can bound the norm of $\|a_1 \otimes s_1 + a_2 \otimes s_2 \|$ by
\[
\left\|a_1 \otimes s_1 + a_2 \otimes s_2 \right\| \leq \left\| D + a_1 D a_1 + a_2 D a_2  \right\|
\]
where $D$ is the diagonal matrix
\[
D = \begin{pmatrix} \lambda & 0 \\ 0 & \mu \end{pmatrix} .
\]
We find that
\[
\left\|a_1 \otimes s_1 + a_2 \otimes s_2 \right\| \leq \max\{\lambda + 2(\lambda^{-1}t+\mu^{-1}), \mu + 2(\lambda^{-1} + \mu^{-1} t^{-1}) \},
\]
for any positive $\lambda, \mu$. Choosing $\lambda = 3, \mu = 7$ and $t = 7$, we find that
\[
\|a_1^2 + a_2^2 \|^{1/2} = 4,
\]
while
\[
\|a_1 \otimes s_1 + a_2 \otimes s_2 \| =:\delta < 8.
\]
Dividing $a$  by $\delta/2$ we obtain $b_1, b_2$ such that $\|b_1 \otimes s_1 + b_2 \otimes s_2\| \leq 2$ while $\|b_1^2 + b_2^2\| > 1$.
\end{proof}

We also have $\fB\subset \cW(s/2)=\cL^\bullet$. Indeed, for any row contraction $a$, i.e., $\sum a_i a_i^*\leq 1$, there is a UCP map $\Phi$ sending $\ell_i$ to $a_i$ \cite{Pop91}. If the $a_i$ are also selfadjoint, i.e., if $a\in\fB$, we find that $\Phi(s/2)=a$.
Due to the next lemma, $\|s/2\|>1$, so the containment is strict.

\begin{lemma}
   \[\|s\|=\left\| \sum s_i^2\right\|^{\frac{1}{2}}=1+\sqrt{d}.\] 
\end{lemma}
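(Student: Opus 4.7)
The plan is to reduce the problem to a concrete calculation on the Fock space. First, since each $s_i$ is selfadjoint, $\|s\|^2 = \|\sum_i s_is_i^*\| = \|\sum_i s_i^2\|$, so it suffices to compute $\|T\|$ for $T := \sum_i s_i^2$. Substituting $s_i=\ell_i+\ell_i^*$, expanding $s_i^2=\ell_i^2+\ell_i\ell_i^*+\ell_i^*\ell_i+\ell_i^{*2}$, and using the relations $\ell_j^*\ell_i=\delta_{ij}I$ and $\sum_i\ell_i\ell_i^* = I-P_\Omega$ (with $P_\Omega$ the rank-one projection onto the vacuum), one obtains
\[
T = B+B^*+(d+1)I-P_\Omega,\qquad B:=\sum_i \ell_i^2.
\]
The same isometry relations give $B^*B=\sum_{i,j}\ell_j^{*2}\ell_i^2=dI$, hence $\|B\|=\sqrt d$ and $V:=B/\sqrt d$ is an isometry on $\cF(\bC^d)$.

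The upper bound then comes for free: $B+B^*$ is selfadjoint with norm at most $2\sqrt d$ and $P_\Omega\geq 0$, so $T\leq (B+B^*)+(d+1)I\leq (1+\sqrt d)^2 I$, and since $T\geq 0$ this yields $\|T\|\leq (1+\sqrt d)^2$.

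For the lower bound I would restrict $T$ to an invariant subspace on which it becomes a tridiagonal Jacobi operator. Since $\ell_i^*\Omega=0$ we have $V^*\Omega=0$, and $V^n\Omega$ lies in tensor level $2n$, so the vectors $\{V^n\Omega\}_{n\geq 0}$ are orthonormal. Using $V^*V=I$, the closed linear span $H_0$ is invariant under $V$, $V^*$, and $P_\Omega$, hence under $T$. In the orthonormal basis $\{V^n\Omega\}$ of $H_0$, $V|_{H_0}$ is unitarily equivalent to the unilateral shift $S$ on $\ell^2(\bN)$, and $P_\Omega|_{H_0}$ becomes the rank-one projection $P_{e_0}$ onto the first basis vector. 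Consequently $T|_{H_0}$ is unitarily equivalent to the Jacobi operator
\[
J := \sqrt d(S+S^*)+(d+1)I-P_{e_0}
\]
on $\ell^2(\bN)$. The spectrum $\sigma(S+S^*)=[-2,2]$ is purely continuous, so $\sigma_{\mathrm{ess}}(\sqrt d(S+S^*)+(d+1)I)=[(1-\sqrt d)^2,(1+\sqrt d)^2]$; by Weyl's theorem the compact (rank-one) perturbation $-P_{e_0}$ leaves the essential spectrum unchanged, so $(1+\sqrt d)^2\in\sigma_{\mathrm{ess}}(J)$ and therefore $\|T\|\geq \|J\|\geq (1+\sqrt d)^2$.

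Combining the two bounds gives $\|T\|=(1+\sqrt d)^2$ and hence $\|s\|=1+\sqrt d$. The step requiring the most care is the identification of $H_0$: checking that $\{V^n\Omega\}$ is really orthonormal, that $H_0$ is invariant under $V$, $V^*$, and $P_\Omega$, and that in this basis $T$ matches the stated Jacobi model. Once this reduction is in hand, the standard spectral theory of the free shift $S+S^*$ together with Weyl's theorem closes the argument.
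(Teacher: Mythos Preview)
Your argument is correct. The expansion $T=B+B^*+(d+1)I-P_\Omega$ and the upper bound via $\|B\|=\sqrt d$ and $T\geq 0$ match the paper's approach exactly. Your lower bound, however, takes a genuinely different route. The paper observes that the tuple $(P_\Omega,\ell_1^2,\ldots,\ell_d^2)$, compressed to the Fock space over $\operatorname{span}(e_1\otimes e_1,\ldots,e_d\otimes e_d)$, is $*$-isomorphic to $(P_\Omega,\ell_1,\ldots,\ell_d)$; this reduces the problem to computing $\|(d+1)I-P_\Omega+\sum_i(\ell_i+\ell_i^*)\|$, which is then read off from Lehner's formula \cite[Theorem~1.8]{Leh99} as $\inf_{t>1}\,(t+d+\tfrac{d}{t-1})=(1+\sqrt d)^2$. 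You instead pass to the much smaller cyclic subspace $H_0=\overline{\operatorname{span}}\{V^n\Omega:n\geq 0\}$, where $T$ becomes the concrete Jacobi operator $\sqrt d(S+S^*)+(d+1)I-P_{e_0}$, and conclude via Weyl's theorem on the stability of the essential spectrum under compact perturbations. Your argument is fully self-contained and avoids the external citation; the paper's argument is shorter once Lehner's formula is granted and has the minor bonus of exhibiting a whole copy of the $d$-shift inside the ``squared'' shifts rather than a single isometry.
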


\begin{proof}
Since $s_i = \ell_i + \ell_i^*$, we have
\[
\sum s_i^2 = (d+1)I - P_\Omega + \sum (\ell_i^2 + \ell_i^{*2}) 
\]
where we write $P_\Omega$ for the vacuum projection.
Now, the tuple $(P_\Omega,\ell_1, \ldots, \ell_d)$ is $*$-isomorphic to the compression of $(P_\Omega,\ell_1^2, \ldots, \ell_d^2)$ to $\mathcal F(\operatorname{span}(e_1\otimes e_1,\ldots,e_d\otimes e_d))$.
Hence
\[
  \left\| \sum s_i^2\right\| \geq \left\|(d+1)I - P_\Omega + \sum (\ell_i + \ell_i^*)\right\|.
\]
The last expression can be computed using the formula (1.6) in \cite[Theorem 1.8]{Leh99}, which reads in our case as
\[
  \left\|(d+1)I - P_\Omega + \sum (\ell_i + \ell_i^*)\right\| = \inf_{t>1} \left|t + d + \frac{d}{t-1}\right|.
\]
The minimum is easily found to be attained at $t = 1+ \sqrt{d}$, and it is equal to $(1+\sqrt{d})^2$.
Equality holds in the expression for the norm, because $\|(d+1)I - P_\Omega + \sum (\ell_i^2 + \ell_i^{*2})\|\leq d+1 + 2\|\sum \ell_i^2\|=d+1+2\sqrt{d}=(1+\sqrt{d})^2$.
\end{proof}

By applying $\sum \ol{s}_i\otimes s_i$ to $x_N\otimes x_N$, where $x_N:=\frac{1}{\sqrt{N}}\sum_{k=1}^{N} e_1^{\otimes k}$ is an approximate eigenvector for $s_1$ with approximate eigenvalue 2,
we find 
that $\|\sum \ol{s}_i\otimes s_i\|>4$ for
$d\geq2$. 
This means that $\cW(s/2)\nsubseteq \fD$.
By selfduality of $\fD$, we also get $\fD \nsubseteq \cL$.

To conclude, let us record the result of our comparison of the Lehner ball $\cL$ and its dual $\cW(s/2)$ with the other matrix balls.
\begin{theorem}
  Whenever $d\geq 2$, we have the following relations.
\[
\Wmin{}(\ol{\bB}_d) \subset \fB \subsetneq \cL,\cW(s/2) \subsetneq \fB^\bullet \subset \Wmax{}(\ol{\bB}_d),
\]
and
\[
\cW(s/2) \nsubseteq \fD \nsubseteq \cL.
\]
\end{theorem}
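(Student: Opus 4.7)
The outer inclusions $\Wmin{}(\ol{\bB}_d) \subset \fB$ and $\fB^\bullet \subset \Wmax{}(\ol{\bB}_d)$ are already recorded in \eqref{eq:balls1}, so the work ahead is to establish (a) the strict inclusions $\fB \subsetneq \cL$ and $\fB \subsetneq \cW(s/2)$, (b) the strict inclusions $\cL \subsetneq \fB^\bullet$ and $\cW(s/2) \subsetneq \fB^\bullet$, and (c) the two non-inclusions $\cW(s/2) \nsubseteq \fD$ and $\fD \nsubseteq \cL$. My plan is to prove (a) and one half of (c) directly, and then to read off (b) and the second half of (c) from polar duality.

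For (a), the first inclusion $\fB \subset \cL$ falls out of the first lemma above: if $\sum a_i^2 \leq 1$ then $\|\sum a_i \otimes s_i\| \leq 2$. Its strictness is exactly the content of the second lemma, which exhibits explicit $2 \times 2$ matrices $b_1, b_2 \in \cL$ satisfying $\|b_1^2 + b_2^2\| > 1$. For the second inclusion $\fB \subset \cW(s/2)$, I would invoke Popescu's dilation theorem \cite{Pop91}: any row contraction $a$ is the UCP image of the shifts $\ell_i$ under some map $\Phi$ on $\cT(\bC^d)$, and when the $a_i$ are selfadjoint we obtain $\Phi(s_i/2) = a_i$, so $a \in \cW(s/2)$. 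Strictness follows from the norm computation $\|s\| = 1+\sqrt{d}$ of the fourth lemma, since for $d \geq 2$ we have $\|s/2\| > 1$; consequently a sufficiently large finite-dimensional compression $X$ of $s/2$ lies in $\cW(s/2)$ but satisfies $\|\sum X_i^2\| > 1$, so $X \notin \fB$.

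At this point (b) requires no further work. Both $\fB$ and $\cW(s/2)$ contain a neighborhood of the origin and are bounded, so polar duality is a strict-inclusion-reversing bijection on this class of selfadjoint matrix convex sets; thus $\fB \subsetneq \cL$ gives $\cW(s/2) = \cL^\bullet \subsetneq \fB^\bullet$, and $\fB \subsetneq \cW(s/2)$ gives $\cL = \cW(s/2)^\bullet \subsetneq \fB^\bullet$. Similarly, once $\cW(s/2) \nsubseteq \fD$ is established, the self-duality $\fD = \fD^\bullet$ of \cite[Lemma 9.2]{DDSS17} together with $\cW(s/2)^\bullet = \cL$ immediately gives $\fD = \fD^\bullet \nsubseteq \cW(s/2)^\bullet = \cL$.

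The core remaining computation is to show $\|\sum \ol{s}_i \otimes s_i\| > 4$ when $d \geq 2$. I plan to apply this operator to the product vector $\ol{x}_N \otimes x_N$, where $x_N := N^{-1/2}\sum_{k=1}^N e_1^{\otimes k}$ is an approximate eigenvector of $s_1$ with eigenvalue $2$. Since $\ell_i^* x_N = 0$ for $i \geq 2$ and the vectors $\ell_i x_N$ for $i \geq 2$ are pairwise orthogonal and also orthogonal to $s_1 x_N$, the squared norm of the image decomposes cleanly as $\sum_{i,j}|\langle s_i x_N, s_j x_N \rangle|^2 = \|s_1 x_N\|^4 + (d-1) \to 16 + (d-1) > 16$. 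An approximation argument through a finite-dimensional compression of $s/2$ then produces $X \in \cW(s/2)$ with $\|\sum X_i \otimes \ol{X}_i\| > 1$, whence $X \notin \fD$. I expect the bookkeeping of cross-terms on the Fock space, together with the passage from the infinite-dimensional inequality to one at a fixed finite matrix level, to be the only nontrivial obstacle; the rest of the theorem is pure assembly of the preceding lemmas together with polar duality.
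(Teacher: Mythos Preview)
Your plan is essentially the paper's own argument: the theorem is assembled from the four preceding lemmas plus polar duality, and the paper likewise establishes $\cW(s/2)\nsubseteq\fD$ by applying $\sum\ol{s}_i\otimes s_i$ to $x_N\otimes x_N$ (and then obtains $\fD\nsubseteq\cL$ from self-duality of $\fD$).

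One caution on your final step: the implication ``$\|\sum X_i\otimes\ol{X}_i\|>1$, whence $X\notin\fD$'' is not valid as written, since membership in $\fD$ is the inequality $\sum X_i\otimes\ol{X}_i\leq 1$, a condition on the \emph{top of the spectrum} rather than on the norm. What rescues the argument is that $\sum s_i\otimes s_i$ has spectrum symmetric about $0$: the parity unitary $V$ on $\cF(\bC^d)$ acting by $(-1)^k$ on $k$-tensors satisfies $Vs_iV^*=-s_i$, so $V\otimes I$ conjugates $\sum s_i\otimes s_i$ to its negative, and hence $\lambda_{\max}=\|\cdot\|>4$. A finite-dimensional compression witnessing a large positive quadratic-form value then gives $X\in\cW(s/2)$ with $\sum X_i\otimes\ol{X}_i\not\leq 1$. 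The paper is equally terse at this passage, so this is bookkeeping you would have had to supply in any case; your Fock-space cross-term computation $\|s_1 x_N\|^4+(d-1)$ is correct.
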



\bibliographystyle{myplainurl}

 \linespread{1.25}

\bibliography{rMR}

\linespread{1}
\setlength{\parindent}{0pt}

\end{document}